\documentclass[12pt,reqno]{amsart}

\usepackage{amsmath, amsthm, amssymb}
\usepackage{amsfonts}
\usepackage{bm,mathrsfs}
\usepackage{enumitem}
\usepackage{hyperref}
\usepackage[dvips]{epsfig}
\usepackage{graphicx}
\usepackage[english]{babel}
\usepackage{mathrsfs}
\usepackage{thmtools}
\theoremstyle{plain}

\usepackage[backgroundcolor=white, bordercolor=blue,
linecolor=blue]{todonotes}

\usepackage{dsfont}
\usepackage{bbm}

\newtheorem{theorem}{Theorem}[section]

\newtheorem{lem}{Lemma}[section]
\newtheorem{prop}{Proposition}[section]

\hypersetup{
    colorlinks=true,
    citecolor=green,
    filecolor=green,
    linkcolor=blue,
    urlcolor=black
}

\numberwithin{equation}{section}

\begin{document}

\title[\tiny{$L_p$ Brunn-Minkowski inequality for eigenvalue of generalized Monge-Ampère equation}]{$L_p$ Brunn-Minkowski inequality for the eigenvalue of generalized Monge-Ampère equation}

\author{Yuxiao Yan}
\author{Feida Jiang$^*$}

\address{School of Mathematics and Shing-Tung Yau Center of Southeast University, Southeast University, Nanjing 211189, P.R. China}
\email{\url{yan_yuxiao@seu.edu.cn}}

\address{School of Mathematics and Shing-Tung Yau Center of Southeast University, Southeast University, Nanjing 211189, P.R. China; \newblock Shanghai Institute for Mathematics and Interdisciplinary Sciences Shanghai 200433, P. R. China}
\email{\url{jiangfeida@seu.edu.cn}}

\date{\today}
	\thanks{*corresponding author}

\keywords{Monge-Ampère equation; eigenvalue problem; $L_p$ Brunn-Minkowski inequality; $L_p$ Prékopa-Leindler inequality}

\subjclass[2020]{35J96,35P30,52A40}

\begin{abstract}
We investigate the eigenvalue of a generalized Monge-Ampère equation. We proved that the eigenvalue of the equation on the $L_p$ Minkowksi addition of two convex bodies satisfies the $L_p$ Brunn-Minkowski inequality.
\end{abstract}

\maketitle

\section{Introduction}
The eigenvalue problem for the classical Monge-Ampère operator on smooth, bounded and uniformly convex domains $\Omega$ in $\mathbb{R}^n$ was first investigated by Lions \cite{Lions}. He considered the eigenvalue problem:
\begin{equation}\label{classical MA eigen}
\begin{cases}
\det D^2u = \lambda|u|^n &\text{ in }\Omega\\
u = 0 &\text{ on }\partial\Omega,
\end{cases}
\end{equation}
and showed the existence of the unique eigenvalue and eigenfunctions for \eqref{classical MA eigen}. The eigenfunctions are unique up to positive multiplicative constants. Tso \cite{Tso} used the variational approach introduced by Bakelman \cite{Bakelman1,Bakelman2} to study such problems. He gave a characterization of the Monge-Ampère eigenvalue for sufficiently smooth, bounded and uniformly convex domain $\Omega$:
\begin{equation}\label{classical variational character}
\lambda(\Omega)=\inf\left\{ \frac{\int_{\Omega} (-u)\det D^2 u~dx}{\int_{\Omega}(-u)^{n+1}~dx}: u\in C^{0,1}(\overline{\Omega})\cap C^2(\Omega)~\text{is nonzero, convex in }
 \Omega,~u\vert_{\partial\Omega}=0\right\}.
\end{equation}

Based on \eqref{classical variational character}, Salani \cite{Salani} proved a Brunn-Minkowski type inequality for the Monge-Ampère eigenvalue. For $C_+^2$ uniformly convex bounded domains $\Omega_0$ and $\Omega_1$, he showed that
\begin{equation}\label{BM}
\lambda(\Omega_\alpha)^{-\frac1 {2n}}\ge (1-\alpha)\lambda(\Omega_0)^{-\frac1{2n}}+\alpha \lambda(\Omega_1)^{-\frac1{2n}}, 
\end{equation}
where $\alpha\in[0,1]$, and $\Omega_\alpha=(1-\alpha) \Omega_0+\alpha \Omega_1$ is the Minkowski addition of two convex domains. Salani conjectured in his paper that the above inequality also holds for general convex domains. This conjecture was completely solved by Le \cite{Le2,Le1}. Le proved that for all bounded convex domains the Brunn-Minkowski inequality \eqref{BM} is true, and equality in \eqref{BM} holds if and only if the convex bodies $\Omega_0$ and $\Omega_1$ are homothetic.

Tong and Yau \cite{Tong-Yau} considered the eigenvalue problem for a generalized class of Monge-Ampère equations:
\begin{equation}\label{general MA eigen}
\begin{cases}
(u^\star)^k\det D^2u = \lambda|u|^{n+k} &\text{ in }\Omega\\
u = 0 &\text{ on }\partial\Omega,
\end{cases}
\end{equation}
where $k\in\mathbb{R}$ could be any real number with $n+k$ nonnegative, $\Omega\subset \mathbb{R}^n$ is a bounded strictly convex domain containing the origin as its interior point, and the function
\[
u^\star(x)=x\cdot Du(x)-u(x)
\]
is the Legendre transform of $u$ evaluated at the point $Du(x)$. The equation \eqref{general MA eigen} arises from the study of affine spheres \cite{Chen-Huang,Klartag}, and is related to the construction of complete Calabi-Yau metrics \cite{Collins-Li}. Tong and Yau discovered a class of variational functionals of \eqref{general MA eigen}
\begin{equation*}
    H_{n+k}(u)=\frac1{n+k+1}\int_{\Omega}(-u)(u^\star)^k~d\mu_u.
\end{equation*}
Here $d\mu_u$ denotes the Monge-Ampère measure, which can be written as $\det D^2u~dx$ if $u$ is $C^2$.
They also showed that there exists a unique eigenvalue and a unique (up to positive multiplicative constants) eigenfunction solving the problem \eqref{general MA eigen}. Naturally, the variational structure of the generalized Monge-Ampère operator $(u^\star)^k\det D^2u$ gives a characterization of the eigenvalue:
\begin{equation}\label{general variational character}
\lambda(\Omega)=\inf\left\{ \frac{\int_{\Omega} (-u)(u^\star)^k~d\mu_u}{\int_{\Omega}(-u)^{n+k+1}~dx}: u\in C^{1}(\overline{\Omega})~\text{is strictly convex in }
 \Omega,~u\vert_{\partial\Omega}=0\right\}.
\end{equation}
Moreover, the eigenfunction is the minimizer of the functional in \eqref{general variational character}. From now on, the notation $\lambda(\Omega)$ always denotes the eigenvalue of \eqref{general MA eigen} on the domain $\Omega$.

Our main result is the following theorem.
\begin{theorem}[$L_p$ Brunn-Minkowski inequality]\label{main result}
    Suppose $k\ge 0$. Let $\Omega_0$ and $\Omega_1$ be bounded, smooth and strictly convex domains containing the origin as an interior point, and $\Omega_\alpha:=(1-\alpha)\Omega_0+_p \alpha\Omega_1$ be the $L_p$-Minkowski addition of $\Omega_0$ and $\Omega_1$, where $p:=k+1\ge 1$ and $\alpha\in[0,1]$. Then
\begin{equation}\label{Lp BM}
    \lambda(\Omega_\alpha)^{-\frac{p}{2n}}\ge (1-\alpha)\lambda(\Omega_0)^{-\frac{p}{2n}}+\alpha\lambda(\Omega_1)^{-\frac{p}{2n}}.
\end{equation}
\end{theorem}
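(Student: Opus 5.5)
Following Salani and Le, I would test the variational characterization \eqref{general variational character} on $\Omega_\alpha$ with a function built from the eigenfunctions of $\Omega_0$ and $\Omega_1$ by an $L_p$ analogue of infimal convolution, and then compare the resulting numerator and denominator using an $L_p$ Prékopa--Leindler inequality.

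\textbf{Step 1: rewrite $\lambda$ on the gradient side.} Let $u_i$ be the eigenfunction on $\Omega_i$, normalized by $\int_{\Omega_i}(-u_i)^{n+k+1}dx=1$, so that $\lambda(\Omega_i)=\int_{\Omega_i}(-u_i)(u_i^\star)^k d\mu_{u_i}$. Changing variables $y=Du_i(x)$ turns the numerator into $\int_{\mathbb{R}^n}(-u_i)\big((Du_i)^{-1}(y)\big)\,(u_i^\star(y))^k\,dy$, an integral of Legendre-type data over all of $\mathbb{R}^n$. This is where the weight $(u^\star)^k$ and the exponent $p=k+1$ become natural. The relevant Legendre-side quantity is $u^\star=x\cdot Du-u$, which is positive, and it plays the role of a support-type function.

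\textbf{Step 2: the test function.} I would define $u_\alpha$ on $\Omega_\alpha$ through its sublevel sets, or equivalently through its Legendre data. I would require
\[
\{u_\alpha\le -t\}=(1-\alpha)\{u_0\le -t\}+_p\alpha\{u_1\le -t\},
\]
suitably rescaled in $t$ to match $\lambda(\Omega_i)$. Equivalently, $(u_\alpha^\star)^p$ is a combination of the $(u_i^\star)^p$, in the spirit of the $L_p$ sum $h_\alpha^p=(1-\alpha)h_0^p+\alpha h_1^p$. Because each sublevel set $\{u_\alpha\le -t\}$ is the $L_p$ sum of convex sets containing the origin, $u_\alpha$ is quasi-convex, and I would then need to verify that it is actually convex and admissible, with $u_\alpha|_{\partial\Omega_\alpha}=0$. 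To optimize the constants, I would put in scaling parameters $s_i=\lambda(\Omega_i)^{-1/(2n)}$ or similar, and choose them at the end so as to produce the exponent $-p/(2n)$. The $2n$ comes from a homogeneity split: $\lambda(t\Omega)=t^{-2n}\lambda(\Omega)$ holds independently of $k$, since the numerator scales like $t^{-2n}$ relative to the denominator once $u$ is rescaled.

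\textbf{Step 3: two inequalities.} I would prove the two inequalities needed for the Rayleigh quotient in \eqref{general variational character}:
\begin{enumerate}[label=(\roman*)]
\item an upper bound on the numerator $\int(-u_\alpha)(u_\alpha^\star)^k d\mu_{u_\alpha}$, using concavity of $\det^{1/n}$ and Hölder on the $L_p$ combination of the $u_i^\star$;
\item a lower bound on the denominator $\int_{\Omega_\alpha}(-u_\alpha)^{n+k+1}dx$.
\end{enumerate}
For (ii) I would use the layer-cake formula together with the $L_p$ Brunn--Minkowski inequality for volumes, $|A+_pB|^{p/n}\ge(1-\alpha)|A|^{p/n}+\alpha|B|^{p/n}$, applied to the sublevel sets. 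Equivalently, this is a Borell--Brascamp--Lieb/$L_p$ Prékopa--Leindler inequality with the appropriate exponent. Combining (i) and (ii), and then optimizing over the free scalings, should yield \eqref{Lp BM}. The assumption $k\ge0$ enters in Hölder's inequality and in the convexity of $t\mapsto t^{p}$.

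\textbf{Main obstacle.} I expect the hardest step to be (i), together with the convexity and regularity of $u_\alpha$. Unlike the case $k=0$, the weight $(u^\star)^k$ and the $L_p$ addition are not compatible with ordinary infimal convolution. The first thing I would try is to work entirely in the Legendre picture: define $u_\alpha$ through $u_\alpha^*$ on the gradient side, where the $L_p$ structure is linear in $p$-th powers, and compare Hessians via Minkowski's determinant inequality. If that fails, I would fall back on the smooth, strictly convex hypothesis to make direct computations on $D^2u_\alpha$.
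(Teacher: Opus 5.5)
\textbf{Verdict: there is a genuine gap.} Your outer frame matches the paper. You test \eqref{general variational character} on an $L_p$-type convolution $u_\alpha$ of the normalized eigenfunctions and deduce $\lambda(\Omega_\alpha)\le(1-\alpha)\lambda(\Omega_0)+\alpha\lambda(\Omega_1)$. You then upgrade this to \eqref{Lp BM} using $\lambda(t\Omega)=t^{-2n}\lambda(\Omega)$ and the identity $(1-\alpha')(s_0\Omega_0)+_p\alpha'(s_1\Omega_1)=c\,\Omega_\alpha$ for suitable $\alpha',c$. But neither analytic step is in hand.

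\textbf{Numerator.} The key idea is missing here. Write $v=u^*$ and $\xi=Du(x)$. Then $u^\star(x)=v(\xi)$ and $-u(x)=v(\xi)-\xi\cdot Dv(\xi)$, and $d\mu_u$ is exactly the Jacobian of $x\mapsto\xi$. Hence
\[
\mathcal{E}(u,\Omega)=\int_{\mathbb{R}^n}\bigl(v-\xi\cdot Dv\bigr)v^{k}\,d\xi=\int_{\mathbb{R}^n}\Bigl(v^p-\tfrac1p\,\xi\cdot D(v^p)\Bigr)\,d\xi ,
\]
where the integrand vanishes off $Du(\Omega)$ because $u=0$ on $\partial\Omega$. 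No Hessian survives, and the integrand is linear in $v^p$. So if $u_\alpha$ is defined by $(u_\alpha^*)^p=(1-\alpha)(u_0^*)^p+\alpha(u_1^*)^p$, the energy is exactly affine: $\mathcal{E}(u_\alpha,\Omega_\alpha)=(1-\alpha)\mathcal{E}(u_0,\Omega_0)+\alpha\mathcal{E}(u_1,\Omega_1)$. This combines Legendre transforms at the same $\xi$, not $u_i^\star$ at the same $x$.

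Your plan for (i), via concavity of $\det^{1/n}$, Minkowski's determinant inequality and Hölder, targets a factor that cancels, and as written gives no mechanism. The Legendre definition is also \emph{not} equivalent to your sublevel-set definition, and you give no way to compute the energy of the latter. One must still check that $u_\alpha\in\mathcal{C}(\Omega_\alpha)$ with $\bar\Omega_\alpha$ the $L_p$ sum; the paper does this using $v_\alpha=h_\alpha$ for large $|\xi|$.

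\textbf{Denominator.} For (ii), the same-level layer-cake argument does not work for this $u_\alpha$. Set $A=(\xi\cdot x_0-u_0(x_0))_+$ and $B=(\xi\cdot x_1-u_1(x_1))_+$. Hölder applied to $v_\alpha\ge[(1-\alpha)A^p+\alpha B^p]^{1/p}$ gives
\[
u_\alpha(ax_0+bx_1)\le a\,u_0(x_0)+b\,u_1(x_1)\qquad\text{whenever } a^q(1-\alpha)^{-q/p}+b^q\alpha^{-q/p}=1 .
\]
For $p>1$ and $0<\alpha<1$, $a+b<1$ on this curve except at $(a,b)=(1-\alpha,\alpha)$. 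So points of $(1-\alpha)\{u_0\le-t\}+_p\alpha\{u_1\le-t\}$ only get $u_\alpha\le-(a+b)t$.

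The inclusion really fails. Take $p=2$, $u_0=|x|^2-1$ on the unit ball, and $u_1(x)=s\,u_0(x/s)$ with $s\ne1$. Then $u_\alpha(x)=c\,u_0(x/c)$ with $c^2=1-\alpha+\alpha s^2$. For $0<t<\min\{1,s\}$, $\{u_\alpha\le-t\}$ is a strictly smaller ball than the $L_2$ sum of the level sets.

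Even granting the inclusion, the method cannot conclude. Put $F=|\{u_0\le-t\}|^{p/n}$ and $G=|\{u_1\le-t\}|^{p/n}$, with norms in $L^{n/p}\bigl((n+p)t^{n+p-1}dt\bigr)$, so that $\|F\|=\|G\|=1$. Integrating the volume inequality gives only $\int_{\Omega_\alpha}|u_\alpha|^{n+p}\ge\|(1-\alpha)F+\alpha G\|^{n/p}$. When $p<n$, Minkowski's inequality shows this lower bound is at most $1$, so it cannot give the needed bound $\ge1$. Mixing heights is essential.

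\textbf{What the paper does instead.} It takes $a=(1-\lambda)^{1/q}(1-\alpha)^{1/p}$ and $b=\lambda^{1/q}\alpha^{1/p}$. It then feeds the pointwise inequality into the $L_p$ Prékopa--Leindler inequality (Proposition~\ref{Lp PL}) with $h=(-u_\alpha)^{n+p}$, $f=(-u_0)^{n+p}$, $g=(-u_1)^{n+p}$, $s=n+p$, $\mu=1-\alpha$, $\omega=\alpha$, which yields $\int_{\Omega_\alpha}|u_\alpha|^{n+p}\ge1$. Your mention of ``BBL/$L_p$ Prékopa--Leindler'' points to the right tool. Without this pointwise inequality for the Legendre-defined $u_\alpha$, however, there is nothing to apply it to.
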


When $p=1$, the $L_p$-Minkowski addition $\Omega_\alpha$ degenerates to the classical Minkowski addition of $\Omega_0$ and $\Omega_1$. Therefore, \eqref{Lp BM} is a generalization of Salani's Brunn-Minkowski inequality \eqref{BM}.

The method that Salani used in his work \cite{Salani} to prove the classical Brunn-Minkowski for Monge-Ampère eigenvalue is the infimum convolution of two convex functions and the Prékopa–Leindler inequality \cite[Theorem~6.4]{Villani}. Simlimarly, our proof of \eqref{Lp BM} also relies on two key technical ingredients. On the one hand, we define a new structure called $L_p$ convolution, which is a generalization of the infimum convolution and compatible with the equation \eqref{general MA eigen}. On the other hand, we need an $L_p$ version of the Prékopa–Leindler inequality, which can be found in \cite{Wu}.

This paper is organized as follows. In section \ref{pre}, we will recall some basic concepts in convex geometry, Monge-Ampère equation, and provide necessary technical inequalities. In section \ref{Lp type convolution}, we introduce the $L_p$ convolution of two convex functions, and prove an identity for generalized Monge-Ampère energy. Finally, in section \ref{proof of main theorem}, we will apply the $L_p$ version of the Prékopa–Leindler inequality to prove Theorem \ref{main result}.

\section{Preliminaries}\label{pre}

\subsection{$L_p$ Minkowski addition of convex bodies}
Let us first recall some basic definitions in convex geometry. Let $\mathcal{K}^n$ be the class of convex bodies in $\mathbb{R}^n$. For $K\in\mathcal{K}^n$, its support function is defined to be
\[
h_K(u)=\sup\limits_{x\in K}~x\cdot u, \quad u\in\mathbb{R}^{n}.
\]
It is easy to check that the support function $h_K$ is a $1$-homogeneous, subadditive and convex function. Conversely, given a $1$-homogeneous, subadditive and convex function, there exists a unique convex body whose support function is the given one. In other words, a convex body and its support function uniquely determine each other.

Recall the Minkowski addition of two convex bodies $K,L\in\mathcal{K}^n$ is 
\[
K+L=\{x+y:x\in K,y\in L\},
\]
which is also a convex body. A significant fact about support function is Minkowski additivity:
\[
h_{K+L}(u)=h_{K}(u)+h_L(u).
\]
In 1960s, Firey \cite{Firey} generalized the Minkowski addition by defining the $L_p$ Minkowski addition $(1-\alpha)K+_p \alpha L$. When $p>1$, it is defined as the convex body with support function 
\[
h_{K,L,p}=((1-\alpha)h_K^p+\alpha h_L^p)^{1/p}. 
\]
However, when $p\in(0,1)$, the function $h_{K,L,p}$ may not be a support function for any convex body in general. Böröczky et al. \cite{Boroczky-Lutwak-Yang-Zhang} gave a feasible way to define $(1-\alpha)K+_p \alpha L$ to be a convex body:
\[
(1-\alpha)K+_p \alpha L:=\bigcap_{u\in\mathbb{S}^{n-1}}\{x\in\mathbb{R}^n:x\cdot u\le [(1-\alpha)h_K^p(u)+\alpha h_L^p(u)]^{\frac1p}\}.
\]
Note that the case $p=1$ is exactly the classical Minkowski addition.

\subsection{Legendre transform}
For the sake of rigor, we restate the definition of convex functions \cite{Schneider}. We say $u:\mathbb{R}^n\to \mathbb{R}\cup\{+\infty\}$ is convex, if $\{u=+\infty\}\ne\mathbb{R}^n$ and
\[
u((1-\lambda)x+\lambda y)\le (1-\lambda)u(x)+\lambda u(y),\quad \lambda\in[0,1].
\]
The set $\mathrm{dom}~u:=\{u\ne+\infty\}$ is called the effective domain of $u$. When we say a convex function is defined on $D\subset \mathbb{R}^n$, it means we extend $u$ to the whole space by setting $u=+\infty$ outside $D$.

Now we define the Legendre transform of a convex function $u:\mathbb{R}^n\to \mathbb{R}\cup\{+\infty\}$ to be
\[
u^*(\xi):=\sup_{x\in\mathbb{R}^n}\{x\cdot\xi-u(x)\}.
\]
One can verify that $u^*$ is also convex. Moreover, if the function $u$ is also closed (i.e. $\mathrm{epi}~u:=\{(x,\eta)\in\mathbb{R}^n\times\mathbb{R}:u(x)\le\eta\}$ is a closed subset of $\mathbb{R}^{n+1}$), then we have $u^{**}=u$.

For a convex function $u$ on an open set $\Omega \subset \mathbb{R}^n$, we define the subdifferential of $u$ at $x \in \Omega$ by
\[
\partial u(x) := \bigl\{ \xi \in \mathbb{R}^n : u(y) \geq u(x) + \xi\cdot (y - x) ~ \text{for all } y \in \Omega \bigr\}.
\]
For a Borel set $E \subset \Omega$, we define
\[
\partial u(E) = \bigcup_{x\in E} \partial u(x).
\]

The following lemma is a classical result in convex analysis, which will be frequently used in the following sections. For the reader's convenience we provide a short proof.
\begin{lem}\label{Fenchel}
Let $u$ be a closed convex function on $\mathbb{R}^n$, and let $u^*$ denote its Legendre transform. Then
\begin{equation}\label{Fen}
u(x)+u^{*}(\xi)\geq \xi\cdot x\quad \text{for all }x,\xi\in\mathbb{R}^n. 
\end{equation}
Moreover, the following three statements are equivalent:
\begin{itemize}
\item[(a)] $u(x)+u^{*}(\xi)=\xi\cdot x$;
\item[(b)] $\xi\in\partial u(x)$;
\item[(c)] $x\in\partial u^{*}(\xi)$.
\end{itemize}
\end{lem}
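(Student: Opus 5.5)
The inequality \eqref{Fen} follows immediately from the definition of the Legendre transform. Since $u^*(\xi)=\sup_{y}\{y\cdot\xi-u(y)\}$, we may take $y=x$ to get $u^*(\xi)\ge x\cdot\xi-u(x)$. This is exactly \eqref{Fen}. If $u(x)=+\infty$, the inequality holds trivially, since $u^*>-\infty$ because $u$ is proper. I will therefore restrict attention to points where everything is finite.

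Next I prove (a)$\Leftrightarrow$(b) by unwinding definitions. By (a), the supremum defining $u^*(\xi)$ is attained at $y=x$, that is,
\[
\xi\cdot x-u(x)\ge \xi\cdot y-u(y)\quad\text{for all } y\in\mathbb{R}^n .
\]
Rearranged, this reads $u(y)\ge u(x)+\xi\cdot(y-x)$ for all $y$, which is precisely $\xi\in\partial u(x)$. Every step is reversible, so (b) implies (a) as well. Here the subdifferential is taken over all of $\mathbb{R}^n$, with $u=+\infty$ outside its domain, which matches the convention of extending by $+\infty$.

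For (a)$\Leftrightarrow$(c) I apply the same argument to the convex function $u^*$ in place of $u$. This gives that $x\in\partial u^*(\xi)$ is equivalent to $u^*(\xi)+u^{**}(x)=x\cdot\xi$. Since $u$ is closed, the biconjugation identity $u^{**}=u$ recalled above applies. The condition then becomes exactly (a).

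The only delicate points are bookkeeping. The first is that $u^*$ is proper, so that the argument can be applied to it; this holds because $u$ is closed, proper and convex, hence has an affine minorant. The second is that the identity $u^{**}=u$ is used exactly where closedness is needed, namely in the step (a)$\Leftrightarrow$(c). I expect no substantial obstacle beyond this.
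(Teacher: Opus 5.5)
Your proof is correct and follows the paper's argument. You get the inequality from the definition, prove (a)$\Leftrightarrow$(b) by noting that (a) says the supremum defining $u^*(\xi)$ is attained at $x$, and prove (a)$\Leftrightarrow$(c) by applying the same equivalence to $u^*$ together with $u^{**}=u$; this last step is what the paper compresses into ``by duality'', and your remarks on properness and the $+\infty$ convention make explicit what the paper leaves implicit.
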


\begin{proof}
Inequality \eqref{Fen} holds trivially.
By duality, it suffices to prove the equivalence of (a) and (b).
Suppose $\xi\in\partial u(x)$. Then
\[
u(y)\geq u(x)+\xi\cdot(y-x)\quad \text{for all }y\in\mathbb{R}^n,
\]
which implies
\[
u^{*}(\xi)\leq \xi\cdot x - u(x).
\]
The reverse inequality holds automatically, so equality must hold in \eqref{Fen}.

Now assume $u(x)+u^{*}(\xi)=\xi\cdot x$. For any $y\in\mathbb{R}^n$, we have
\[
\xi\cdot y - u(y)\leq u^{*}(\xi)=\xi\cdot x - u(x),
\]
which implies $\xi\in\partial u(x)$.
\end{proof}

Next we introduce the Monge-Ampère measure, which is related to Legendre transform. For a convex function $u$ on $\Omega$, set 
\[
Mu(E)=|\partial u(E)| \quad \mathrm{for~any~subset~}E\subset \Omega.
\]
Then $Mu:\mathcal{S}\to \mathbb{R}\cup\{+\infty\}$ is a measure, finite on compact sets, where $\mathcal{S}$ is the Borel $\sigma$-algebra defined by
\[
\mathcal{S}=\{E\subset\Omega:\partial u(E) \text{ is Lebesgue measurable}\}.
\]
$Mu$ is called the Monge-Ampère measure associated with the convex function $u$.

The following change of variables formula is from \cite{Le2}, which relates the Monge–Ampère measure to the Legendre transform.
\begin{lem}\label{change variable}
Let $u$ be a convex function on a bounded convex domain $\Omega\subset\mathbb{R}^n$. Then
\begin{equation*}
\int_{\Omega} fd\mu_u = \int_{\partial u(\Omega)} f(Du^*(\xi))d\xi
\quad \text{for all } f \in C(\Omega). 
\end{equation*}
\end{lem}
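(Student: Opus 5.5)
The plan is to show that the Monge–Ampère measure $\mu_u$ is the push-forward of Lebesgue measure restricted to $\partial u(\Omega)$ under the map $\xi\mapsto Du^*(\xi)$. The formula then follows by the standard measure-theoretic extension from indicator functions to general $f$. To make $u^*$ meaningful we extend $u$ by $+\infty$ outside $\Omega$ and replace it by its closure (lower semicontinuous envelope). This does not change $u$ or $\partial u$ at interior points of $\Omega$, so $\partial u(\Omega)$ is unaffected. Since $u^*$ is convex, it is differentiable at Lebesgue-a.e.\ point of the interior of its effective domain. The boundary of a convex set is Lebesgue-null, so $Du^*(\xi)$ is defined for a.e.\ $\xi\in\partial u(\Omega)$.

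\textbf{Step 1: the map $T=Du^*$ inverts $\partial u$ a.e.} By Lemma \ref{Fenchel}, $\xi\in\partial u(x)$ if and only if $x\in\partial u^*(\xi)$. Let $N$ be the set of $\xi$ lying in $\partial u(x)\cap\partial u(y)$ for some $x\neq y$ in $\Omega$. For such $\xi$ the set $\partial u^*(\xi)$ contains two distinct points, so $u^*$ is not differentiable at $\xi$. Hence $N$ is Lebesgue-null; this is Aleksandrov's lemma, obtained here directly from Lemma \ref{Fenchel}. For $\xi\in\partial u(\Omega)\setminus N$ at which $u^*$ is differentiable, $\partial u^*(\xi)=\{Du^*(\xi)\}$. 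Therefore $T(\xi):=Du^*(\xi)$ is the unique $x\in\Omega$ with $\xi\in\partial u(x)$.

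\textbf{Step 2: identify the measure.} For a Borel set $E\subset\Omega$, Step 1 shows that, up to a Lebesgue-null set,
\[
\partial u(E)=\{\xi\in\partial u(\Omega): T(\xi)\in E\}=T^{-1}(E)\cap\partial u(\Omega).
\]
Hence $\mu_u(E)=|\partial u(E)|=\big|T^{-1}(E)\cap \partial u(\Omega)\big|$. In particular $T$ is measurable on $\partial u(\Omega)$, and $\mu_u=T_\#\big(\mathcal L^n\llcorner\partial u(\Omega)\big)$ on Borel sets. This gives the formula for $f=\chi_E$, and by linearity for nonnegative simple functions.

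\textbf{Step 3: extend to $f\in C(\Omega)$.} For $f\ge 0$ continuous, approximate $f$ from below by increasing simple Borel functions and apply monotone convergence on both sides. For general $f$, split $f=f^+-f^-$. The identity then holds whenever either side is defined, with both sides possibly infinite, since $f$ need not be bounded near $\partial\Omega$.

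I expect the main obstacle to be Step 1. One must check carefully that the exceptional set $N$, together with the non-differentiability set of $u^*$ inside $\partial u(\Omega)$, is Lebesgue-null. One must also check that the closure/extension of $u$ leaves $\partial u$ unchanged on $\Omega$. Once this a.e.\ injectivity is in place, Steps 2 and 3 are routine.
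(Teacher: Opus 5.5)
The paper does not prove this lemma. It quotes it from Le's work and uses it as a black box, so the only thing to compare against is that citation.

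Your argument is correct and self-contained. It is the standard push-forward argument: you identify $\mu_u$ with the push-forward of Lebesgue measure on $\partial u(\Omega)$ under $T=Du^*$. The key point is that $\xi\in\partial u(x)$ forces $x\in\partial u^*(\xi)$, by Lemma \ref{Fenchel} applied to the closure of $u$, whose subdifferential agrees with $\partial u$ on $\Omega$. So at every point of $\partial u(\Omega)$ where $u^*$ is differentiable, $Du^*(\xi)$ is the unique preimage of $\xi$ in $\Omega$.

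A few simplifications are available:
\begin{itemize}
\item Since $\Omega$ is bounded and $u$ lies above a supporting affine function, $u^*$ is finite on all of $\mathbb{R}^n$. Your remark about the boundary of $\operatorname{dom}u^*$ is therefore unnecessary.
\item The measurability of $T$ is better obtained directly than deduced from the set identity, which would otherwise presuppose that $\partial u(E)$ is measurable for Borel $E$. Directly: the differentiability set of $u^*$ is Borel, $Du^*$ is Borel there as a limit of difference quotients, and $\partial u(\Omega)$ is $\sigma$-compact.
\item Done this way, your route also yields the countable additivity of $\mu_u$ on Borel sets, which the paper takes as known.
\end{itemize}

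Your caveat in Step 3 is appropriate. For unbounded $f$, or when $|\partial u(\Omega)|=\infty$ (e.g.\ $u=-\sqrt{1-|x|^2}$ on the unit ball), the identity must be read for $f\ge 0$, or whenever one side is defined.
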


We need two more technical lemmas about Legendre transform.
\begin{lem}\label{C1}
    Let $\Omega\subset\mathbb{R}^n$ be a convex domain. If $u:\bar\Omega\to\mathbb{R}$ is a closed and strictly convex function, then $v:=u^*\in C^1(\mathbb{R}^n)$.
\end{lem}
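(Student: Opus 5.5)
The plan is to use the standard duality between strict convexity of $u$ and differentiability of $u^*$. Extend $u$ by $+\infty$ outside $\bar\Omega$. Since $u$ is closed (lower semicontinuous) and $\bar\Omega$ is bounded when $\Omega$ is a bounded domain, for every $\xi\in\mathbb{R}^n$ the supremum
\[
v(\xi)=\sup_{x\in\bar\Omega}\{x\cdot\xi-u(x)\}
\]
is finite and attained, because an upper semicontinuous function attains its maximum on a compact set. Hence $v$ is a finite convex function on all of $\mathbb{R}^n$, and in particular it is continuous. If $\Omega$ is unbounded, I would instead need a superlinear growth assumption so that $v$ stays finite; I expect the intended setting is the bounded one, as for the domains in Theorem \ref{main result}.

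The key step is to show that $\partial v(\xi)$ is a singleton for every $\xi$.

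1. By Lemma \ref{Fenchel}, $x\in\partial v(\xi)$ if and only if $u(x)+v(\xi)=x\cdot\xi$, that is, if and only if $x$ is a maximizer of $x\mapsto x\cdot\xi-u(x)$ over $\bar\Omega$. Here I use that $u$ is closed, so $u^{**}=u$ and the lemma applies with the roles of $u$ and $u^*$ interchanged.
2. Maximizers exist by the compactness argument above, so $\partial v(\xi)\neq\emptyset$.
3. Suppose $x_1\neq x_2$ are two maximizers. Strict convexity of $u$ on $\bar\Omega$ makes $x\mapsto x\cdot\xi-u(x)$ strictly concave there. Then the midpoint $\frac{x_1+x_2}{2}\in\bar\Omega$ gives a strictly larger value, which contradicts maximality.

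So $\partial v(\xi)=\{x(\xi)\}$ for every $\xi$.

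It remains to pass from single-valued subdifferentials to $C^1$ regularity.

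1. A finite convex function on an open set whose subdifferential at a point is a singleton is differentiable at that point, with gradient equal to that element (Rockafellar, Theorem 25.1). Hence $v$ is differentiable everywhere.
2. A convex function that is differentiable on an open set is automatically $C^1$ there (Rockafellar, Theorem 25.5).
3. Alternatively, continuity of $\xi\mapsto Dv(\xi)=x(\xi)$ follows from a closed-graph argument. Suppose $\xi_j\to\xi$ and $x(\xi_j)\to\bar x$ along a subsequence, which is possible since $\bar\Omega$ is compact. Passing to the limit in $v(\eta)\ge v(\xi_j)+x(\xi_j)\cdot(\eta-\xi_j)$, using continuity of $v$, gives $\bar x\in\partial v(\xi)$. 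Uniqueness then forces $\bar x=x(\xi)$.

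I expect the main obstacle to be the boundary behaviour, not the differentiability itself. Maximizers may lie on $\partial\Omega$, so strict convexity must be understood on all of $\bar\Omega$, as the statement assumes. The closedness hypothesis is exactly what guarantees that maximizers exist and that Lemma \ref{Fenchel} applies to $v$.
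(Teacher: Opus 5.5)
Your proof is correct and follows essentially the paper's argument: you identify $\partial v(\xi)$ with the set of maximizers of $x\mapsto x\cdot\xi-u(x)$ over $\bar\Omega$ via Lemma~\ref{Fenchel}, use strict convexity to rule out two distinct maximizers, and then invoke the standard fact that a finite convex function whose subdifferential is a singleton everywhere is $C^1$. Your additional points (compactness of $\bar\Omega$ giving finiteness of $v$ and existence of maximizers) make explicit what the paper leaves implicit. Your boundedness caveat is a genuine correction, because the statement says only ``convex domain'': for $\Omega=\mathbb{R}^n$ and $u(x)=\sqrt{1+|x|^2}$, the transform $u^*$ equals $+\infty$ for $|\xi|>1$, so the lemma fails as literally stated.
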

\begin{proof}
    Clearly $v$ is a closed convex function with $v^{**}=u$. One can easily verify that
    \[
    \partial v(\xi)=\{x\in\bar\Omega: x \text{ is the maximum point of the function }x\cdot \xi-u(x)\}.
    \]
    Denote the set on the right hand side to be $S$. Suppose there are two different points $x_1,x_2$ in $S$. Define $x_t:=(1-t)x_1+tx_2$. Use the strict convexity of $u$ to compute
    \[
    \xi\cdot x_t-u(x_t)>v(\xi),
    \]
    which contradicts the fact that $x_1$ or $x_2$ is the maximum point of the function $x\mapsto x\cdot \xi-u(x)$. Thus $\partial v(\xi)$ is a singleton. The classical convex analysis tells us a convex function whose subdifferential is a singleton at each point is $C^1$ (see, for example, Theorem 2.38 in \cite{Le3}).
\end{proof}

\begin{lem}\label{strictly convex}
    Let $v:\mathbb{R}^n\to\mathbb{R}$ be a $C^1$ convex function. Then $u:=v^*$ is strictly convex in the interior of $\mathrm{dom}~u$.
\end{lem}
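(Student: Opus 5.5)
To prove Lemma \ref{strictly convex}, I will argue by contradiction, using the equivalence in Lemma \ref{Fenchel} to convert a failure of strict convexity of $u=v^*$ into a failure of differentiability of $v$.

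First, since $v$ is a finite convex function on $\mathbb{R}^n$, it is continuous, hence closed. Therefore $v^{**}=v$, and $u=v^*$ is a closed convex function. Fix two points $\xi_1\neq\xi_2$ in the interior of $\mathrm{dom}\,u$. Suppose, for contradiction, that strict convexity fails on the segment between them. Because $u$ is convex, a single failure of the strict inequality at some intermediate point forces $u$ to be affine on the whole segment $[\xi_1,\xi_2]$.

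Next, let $\xi_0=\tfrac12(\xi_1+\xi_2)$. This point lies in the interior of $\mathrm{dom}\,u$, so $\partial u(\xi_0)$ is nonempty, and I pick some $x\in\partial u(\xi_0)$. Let $\ell(\xi)=u(\xi_0)+x\cdot(\xi-\xi_0)$ be the corresponding supporting affine function. Then $u\ge \ell$ everywhere, with equality at $\xi_0$. The difference $u-\ell$ is affine along the segment, nonnegative there, and zero at the interior point $\xi_0$. An affine function on a segment that is nonnegative and vanishes at an interior point must vanish identically, so $u=\ell$ on the entire segment. Consequently $x\in\partial u(\xi)$ for every $\xi\in[\xi_1,\xi_2]$, since the supporting inequality $u\ge\ell$ holds globally and is attained at each such $\xi$.

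Finally, I apply Lemma \ref{Fenchel} to the closed convex function $u$, whose Legendre transform is $u^*=v^{**}=v$. The condition $x\in\partial u(\xi)$ is equivalent to $\xi\in\partial v(x)$. Hence $\partial v(x)$ contains the whole segment $[\xi_1,\xi_2]$, and in particular the two distinct points $\xi_1,\xi_2$. But $v\in C^1$, so $\partial v(x)=\{Dv(x)\}$ is a singleton. This is the contradiction, and it proves that $u$ is strictly convex in the interior of $\mathrm{dom}\,u$.

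The only delicate step is the second one: showing that a single subgradient $x$ serves simultaneously for all points of the segment. This is exactly where the interior assumption is needed, since it guarantees that $\partial u(\xi_0)\neq\emptyset$. The rest is bookkeeping with Lemma \ref{Fenchel}.
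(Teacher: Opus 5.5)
Your proof is correct and is essentially the paper's argument. You take a subgradient at an interior point of a segment on which $u$ fails to be strictly convex, show it is also a subgradient at both endpoints, and use Lemma~\ref{Fenchel} to conclude that both endpoints lie in the subdifferential of $v$ at that single subgradient, which contradicts $v\in C^1$. The only cosmetic difference is that you first upgrade to ``$u$ is affine on the whole segment'' and work at the midpoint, whereas the paper works at the equality point $x_t$ and gets the endpoint subgradient identities directly by averaging the two subgradient inequalities.
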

\begin{proof}
    Suppose $u$ is not strictly convex. Then there exists $x_1,x_2\in \mathrm{int~dom}~u$ and $t\in(0,1)$ such that 
    \begin{equation}\label{equal}
    u(x_t)=tu(x_1)+(1-t)u(x_2),\quad \text{where }x_t=tx_1+(1-t)x_2.
    \end{equation}
    Choose $\xi\in\partial u(x_t)$. Since $v$ is $C^1$, we have $x_t=Dv(\xi)$. The definition of subdifferential gives
    \[
    u(x_1)\ge u(x_t)+\xi\cdot(x_1-x_t),\quad u(x_2)\ge u(x_t)+\xi\cdot(x_2-x_t).
    \]
    Combining these two inequalities gives 
    \[
    tu(x_1)+(1-t)u(x_2)\ge u(x_t).
    \]
    Since we have \eqref{equal}, all the inequalities above are equalities. Hence $\xi\in\partial u(x_1)\cap\partial u(x_2)$, which is equivalent to 
    be $x_1,x_2\in\partial v(\xi)$ by Lemma \ref{Fenchel}. Nevertheless, $v\in C^1(\mathbb{R}^n)$ implies $x_1=x_2=Dv(\xi)$, which is a contradiction.
\end{proof}

\subsection{Monge-Ampère equation}
We summarize the results in \cite{Tong-Yau} which will be used in the following sections. Fix $\Omega\subset \mathbb{R}^n$ to be a bounded, smooth and strictly convex domain containing the origin. Let $k$ be a real number with $k+n\ge 0$. Consider the eigenvalue problem
\begin{equation}\label{general MA}
\begin{cases}
(u^\star)^k\det D^2u = \lambda|u|^{n+k} &\text{ in }\Omega\\
u = 0 &\text{ on }\partial\Omega.
\end{cases}
\end{equation}

Tong and Yau proved the following theorem.
\begin{prop}\label{Tong Yau}
    There exists $u\in C^{2,\alpha}(\bar{\Omega})\cap C^\infty(\Omega)$ which is a solution to \eqref{general MA} for $\lambda=\underline{\lambda}_{n+k+1}$, where
    \begin{equation}\label{eigen}
    \underline{\lambda}_{n+k+1}=\inf\left\{ \frac{\int_{\Omega} (-u)(u^\star)^k~d\mu_u}{\int_{\Omega}(-u)^{n+k+1}~dx}: u\in C^{1}(\overline{\Omega})~\text{is strictly convex in }
 \Omega,~u\vert_{\partial\Omega}=0\right\}.
    \end{equation}
    Moreover, if $(\lambda',u')$ is another pair of such solution, then $\lambda=\lambda'$ and there exists $c>0$ such that $u=cu'$.
\end{prop}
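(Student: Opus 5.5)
The plan follows Lions and Tso: solve a family of subcritical problems, normalize them, and pass to the critical exponent $n+k$. The variational characterization \eqref{eigen} then identifies the limit, and a comparison argument gives uniqueness.

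A structural observation makes the operator $(u^\star)^k\det D^2u$ uniformly nondegenerate. If $u$ is convex, negative in $\Omega$ and $0\in\Omega$, then convexity gives
\[
u(0)\ge u(x)+Du(x)\cdot(0-x),
\qquad\text{hence}\qquad
u^\star(x)=x\cdot Du(x)-u(x)\ge -u(0)>0 .
\]
In addition $u^\star\le C(\|Du\|_\infty,\operatorname{diam}\Omega)$. So $u^\star$ is trapped between positive constants, and $(u^\star)^{k}$ acts like a bounded, positive, gradient-dependent coefficient. The standard $C^{2,\alpha}$ theory for $\det D^2u=f(x,u,Du)$ on smooth strictly convex domains therefore applies (Caffarelli–Nirenberg–Spruck and Trudinger–Urbas type estimates). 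Interior $C^\infty$ regularity then follows by bootstrapping once $u<0$ in $\Omega$.

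\textbf{Existence.} For $q\in(0,n+k)$, solve
\[
\det D^2u=(-u)^{q}(u^\star)^{-k}\ \text{in }\Omega,\qquad u=0\ \text{on }\partial\Omega .
\]
Because $q<n+k$ and the problem is homogeneous, a nonzero convex solution can be obtained by a continuity or degree argument in $q$, or by minimizing the functional
\[
\frac{1}{n+k+1}\int(-u)(u^\star)^kd\mu_u-\frac{1}{q+1}\int(-u)^{q+1}dx .
\]
The energy is positively homogeneous of degree $n+k+1$, since $u\mapsto tu$ scales $u^\star$ by $t$ and $\mu_u$ by $t^n$. This computation, together with the change of variables of Lemma~\ref{change variable} written in terms of $v=u^*$ (with Lemmas~\ref{C1} and \ref{strictly convex} guaranteeing $v\in C^1$), is used to verify that the Euler–Lagrange equation of $H_{n+k}$ is exactly $(u^\star)^k\det D^2u$. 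Next rescale $u_q=t_qw_q$ with $\|w_q\|_{L^\infty}=1$, so that $w_q$ solves $(w_q^\star)^k\det D^2w_q=\lambda_q(-w_q)^q$. Uniform bounds $0<c\le\lambda_q\le C$ come from Aleksandrov's maximum principle (lower bound on $\lambda_q$) and from comparison with a ball inside $\Omega$ (upper bound on $\lambda_q$). Uniform gradient and $C^{2,\alpha}$ bounds come from the nondegeneracy above. Letting $q\to n+k$ gives a limit pair $(\lambda,u)$ solving \eqref{general MA}.

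\textbf{Identification and uniqueness.} Multiplying the equation by $(-u)$ and integrating shows that $\lambda$ equals the Rayleigh quotient of $u$, so $\lambda\ge\underline\lambda_{n+k+1}$. For the reverse inequality and for uniqueness I would run Lions' comparison argument. Let $(\lambda',u')$ be another solution and set
\[
c^*=\sup\{c>0:\ cu'\ge u \text{ in }\Omega\}.
\]
Hopf's lemma for both functions at $\partial\Omega$ makes $c^*$ finite and positive. If $\lambda<\lambda'$, then $c^*u'$ is a strict subsolution touching $u$, which the comparison principle forbids; the case $\lambda>\lambda'$ is symmetric. When $\lambda=\lambda'$, the strong maximum principle forces $u=c^*u'$. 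Applying the same comparison with $u'$ replaced by any strictly convex competitor, and using a Picone-type or convexity-of-energy inequality in the variable $v=u^*$, yields $\lambda\le\underline\lambda_{n+k+1}$.

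\textbf{Main obstacle.} The hardest step is the comparison principle for $(u^\star)^k\det D^2u$. The operator depends on $Du$ through $u^\star$ and is not monotone in $u$ in the usual way. I would first try to handle it by comparing $u$ and $c^*u'$ at an interior touching point, where $u=c^*u'$, $Du=c^*Du'$ and $D^2u\ge c^*D^2u'$. At such a point $u^\star=(c^*u')^\star$, so the gradient dependence cancels exactly and the determinants can be compared directly. The boundary case is handled by Hopf's lemma.
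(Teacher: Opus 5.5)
The paper does not prove this proposition; it quotes it from Tong--Yau. Your outline therefore has to stand on its own, and it has two genuine gaps.

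The first gap is the regularity mechanism. Your bound $u^\star\ge -u(0)>0$ is correct and removes the degeneracy coming from $(u^\star)^k$. But the right-hand side $f=\lambda(-u)^{n+k}(u^\star)^{-k}$ still vanishes on $\partial\Omega$, because $u=0$ there; the same holds for $(-u)^q(u^\star)^{-k}$ in your subcritical problems. The equation degenerates at the boundary like $\operatorname{dist}(x,\partial\Omega)^{n+k}$. The global estimates of Caffarelli--Nirenberg--Spruck and Trudinger--Urbas require $f\ge c>0$ on $\bar\Omega$. So they give neither uniform $C^{2,\alpha}(\bar\Omega)$ bounds as $q\to n+k$ nor $u\in C^{2,\alpha}(\bar\Omega)$ at the end. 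Even for $k=0$, Lions only obtained $C^{1,1}(\bar\Omega)$, and smoothness up to the boundary needed a separate degenerate Schauder theory (Le--Savin).

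The compactness step itself can be rescued: barriers give Lipschitz bounds since the right-hand side is bounded, and Caffarelli's estimates handle the interior. However, your boundary Hopf step in the uniqueness argument silently relies on the missing global regularity. The linearized operator has coefficients $\operatorname{cof}D^2u$, which degenerate at $\partial\Omega$ because $\det D^2u=0$ there. Hopf's lemma becomes available only once you know $u,u'\in C^2(\bar\Omega)$ with positive normal derivative, so that the normal--normal cofactor entry stays positive. By contrast, finiteness and positivity of $c^*$ follow from convexity and Lipschitz bounds alone, with no Hopf needed.

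The second gap is the inequality $\lambda\le\underline\lambda_{n+k+1}$, which is exactly what the paper relies on later. Lemma~\ref{eigen convex ine} needs $\lambda(\Omega_\alpha)$ to be at most the Rayleigh quotient of the competitor $u_\alpha$. A comparison argument needs both functions to solve an equation. Touching an arbitrary strictly convex competitor $w$ by a multiple of $u$ only gives an inequality at one point. It says nothing about the integral quotient $\int(-w)(w^\star)^k\,d\mu_w/\int(-w)^{n+k+1}\,dx$, and ``a Picone-type or convexity-of-energy inequality'' is not an argument.

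The standard route is to take the subcritical solutions to be minimizers. By scaling, minimizing $\frac1{n+k+1}\mathcal{E}(w)-\frac1{q+1}\int(-w)^{q+1}dx$ is equivalent to minimizing $R_q(w)=\mathcal{E}(w)/\|w\|_{L^{q+1}}^{n+k+1}$. With your normalization, $R_q(w_q)=\lambda_q\|w_q\|_{L^{q+1}}^{q-n-k}$, and $\|w_q\|_{L^{q+1}}$ stays bounded above and below. Hence $\lambda=\lim\lambda_q=\lim R_q(w_q)\le\lim R_q(w)$ for every fixed admissible $w$.

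The real work then lies in showing that such a minimizer exists in a suitable class and satisfies the Euler--Lagrange equation. Your ``continuity, degree or minimization'' leaves this open. Homogeneity of $H_{n+k}$ only gives Euler's identity in the radial direction, not the first variation in a general direction $\phi$.

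A minor slip: with $c^*=\sup\{c:cu'\ge u\}$, the function $c^*u'$ lies above $u$, and interior touching gives $\lambda'\ge\lambda$. So it is the case $\lambda>\lambda'$ that is excluded, since a strict subsolution may touch a solution from above. Your symmetric argument still yields $\lambda=\lambda'$.
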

The numerator in \eqref{eigen} is called the generalized Monge-Ampère energy, denoted by
\[
\mathcal{E}(u,\Omega):=\int_{\Omega} (-u)(u^\star)^k~d\mu_u.
\]

In fact, one can verify that the eigenfunction $u$ is the minimizer of the functional in the right hand side of \eqref{eigen}.

\subsection{$L_p$ version of Prékopa-Leindler inequality}
The classical Prékopa-Leindler inequality is the functional type of the classical Brunn-Minkowski inequality. Let us recall these inequalities, and readers can find details in \cite{Schneider,Villani}.
\begin{prop}[Brunn-Minkowski inequality]
    Let $K$ and $L$ be nonempty bounded measurable sets in $\mathbb{R}^n$ such that $(1-\lambda)K+\lambda L$ is also measurable for $\lambda\in(0,1)$. Then
    \[
    V_n((1-\lambda)K+\lambda L)^{\frac1n}\ge (1-\lambda)V_n(K)^{\frac1n}+\lambda V_n(L)^{\frac1n}.
    \]
    Here $V_n(\cdot)$ denotes the Lebesgue measure.
\end{prop}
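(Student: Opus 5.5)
This is the classical Brunn–Minkowski inequality, quoted from \cite{Schneider,Villani}. The natural route is to derive it from the Prékopa–Leindler inequality, which the paper is about to recall. The plan is to prove Prékopa–Leindler first and then specialize.

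\emph{Step 1: Prékopa–Leindler in dimension one.} Let $f,g,h\ge0$ be integrable on $\mathbb{R}$ with
\[
h((1-\lambda)x+\lambda y)\ge f(x)^{1-\lambda}g(y)^\lambda .
\]
After normalizing, assume $\sup f=\sup g=1$. Each level set $\{h\ge t\}$ contains $(1-\lambda)\{f\ge t\}+\lambda\{g\ge t\}$. The one-dimensional set inequality $|A+B|\ge|A|+|B|$ is elementary: translate $A$ and $B$ so that $\sup A=0=\inf B$, then $A+B\supset A\cup B$. Integrating the level-set inequality over $t\in(0,1)$ gives
\[
\int h\ge(1-\lambda)\int f+\lambda\int g ,
\]
and AM–GM then gives $\int h\ge (\int f)^{1-\lambda}(\int g)^\lambda$. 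The normalization is undone by replacing $f,g,h$ with suitable constant multiples, which the multiplicative hypothesis tolerates.

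\emph{Step 2: induction on dimension.} Write $x=(x',s)$ and set $F(s)=\int f(x',s)\,dx'$, with $G$ and $H$ defined the same way. For fixed $s_0,s_1$, put $s=(1-\lambda)s_0+\lambda s_1$. The slices $f(\cdot,s_0)$, $g(\cdot,s_1)$, $h(\cdot,s)$ satisfy the hypothesis in $\mathbb{R}^{n-1}$. By induction, $H(s)\ge F(s_0)^{1-\lambda}G(s_1)^\lambda$, and the one-dimensional case then finishes the step. Measurability of $F$, $G$, $H$ follows from Fubini.

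\emph{Step 3: deduce Brunn–Minkowski.} Apply Prékopa–Leindler with $f=\mathbf 1_K$, $g=\mathbf 1_L$, $h=\mathbf 1_{(1-\lambda)K+\lambda L}$. This gives the multiplicative form
\[
V_n((1-\lambda)K+\lambda L)\ge V_n(K)^{1-\lambda}V_n(L)^\lambda .
\]
To upgrade this to the $1/n$ form, use homogeneity. Set $K'=K/V_n(K)^{1/n}$, $L'=L/V_n(L)^{1/n}$, and
\[
\mu=\frac{\lambda V_n(L)^{1/n}}{(1-\lambda)V_n(K)^{1/n}+\lambda V_n(L)^{1/n}} .
\]
Then $(1-\mu)K'+\mu L'$ is a rescaling of $(1-\lambda)K+\lambda L$. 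Since $K'$ and $L'$ have unit volume, the multiplicative inequality gives this rescaled set volume at least $1$, which is exactly the claimed inequality. The degenerate case where one of the sets has zero volume is handled by monotonicity after a translation, since $(1-\lambda)K+\lambda L$ contains a translate of $\lambda L$.

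The main obstacle is the measurability bookkeeping in Step 2, namely checking that the slice functions and $H$ are measurable. For this one can replace $h$ by its measurable majorant or work with outer integrals. The measurability hypothesis on $(1-\lambda)K+\lambda L$ in the statement is what makes $h$ legitimate in Step 3.
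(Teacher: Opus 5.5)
Your proposal is correct and is essentially the paper's approach: the paper does not prove this proposition but cites Schneider and Villani and remarks that it follows quickly from the Prékopa--Leindler inequality (via Gardner's survey), and your argument is exactly that standard derivation (one-dimensional Prékopa--Leindler via level sets, induction on dimension, indicator functions, then rescaling to pass from the multiplicative form to the $\frac1n$ form, plus the translation argument for the degenerate case). The remaining details are routine: if $f$ or $g$ is unbounded, truncate before normalizing; and in the one-dimensional set inequality, pass to compact subsets, since $\sup A$ need not be attained for a general measurable $A$.
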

\begin{prop}[Prékopa-Leindler inequality]
    Given $0<\lambda<1$, let $f,g$ and $h$ be nonnegative integrable functions on $\mathbb{R}^n$ satisfying
    \[
    h((1-\lambda)x+\lambda y)\ge f(x)^{1-\lambda}g(y)^\lambda
    \]
    for all $x,y\in \mathbb{R}^n$. Then
    \[
    \int_{\mathbb{R}^n}h~dx\ge (\int_{\mathbb{R}^n}f~dx)^{1-\lambda}(\int_{\mathbb{R}^n}g~dx)^\lambda.
    \]
\end{prop}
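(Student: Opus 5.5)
We prove the Prékopa-Leindler inequality by induction on the dimension $n$. The one-dimensional case comes from a level-set argument combined with the one-dimensional Brunn-Minkowski inequality, and the inductive step from Fubini's theorem.

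\textbf{Reductions.} If $\int f=0$ or $\int g=0$ there is nothing to prove. By monotone convergence we may replace $f,g$ by $\min(f,N)\mathbf 1_{B_N}$ and $\min(g,N)\mathbf 1_{B_N}$. This keeps the hypothesis valid for the same $h$, since the truncated functions are smaller. So we may assume $f,g$ are bounded with positive integrals.

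The statement is invariant under $f\mapsto af$, $g\mapsto bg$, $h\mapsto a^{1-\lambda}b^{\lambda}h$ for $a,b>0$. Hence we may normalize $\|f\|_\infty=\|g\|_\infty=1$, taking essential suprema. To avoid null-set issues, we may also modify $f,g$ on null sets so that the suprema are attained up to any $\varepsilon$. Lowering $f,g$ only weakens the hypothesis requirement, so this is harmless.

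\textbf{Case $n=1$.} For $0<t<1$ put $A_t=\{f\ge t\}$, $B_t=\{g\ge t\}$ and $C_t=\{h\ge t\}$. By the normalization $A_t$ and $B_t$ are nonempty. If $x\in A_t$ and $y\in B_t$, the hypothesis gives $h((1-\lambda)x+\lambda y)\ge t^{1-\lambda}t^\lambda=t$, so
\[
(1-\lambda)A_t+\lambda B_t\subset C_t .
\]
We then use the one-dimensional Brunn-Minkowski inequality $|A+B|\ge|A|+|B|$ for nonempty sets. For compact $A,B$ it is elementary: translate so that $\max A=0=\min B$, and then $A\cup B\subset A+B$ with $|A\cap B|=0$. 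For measurable sets we pass to inner compact approximations, which handles measurability of the sum. This gives
\[
|C_t|\ge(1-\lambda)|A_t|+\lambda|B_t| .
\]
Integrating over $t\in(0,1)$ with the layer-cake formula yields
\[
\int h\ge\int_0^1|C_t|\,dt\ge(1-\lambda)\int f+\lambda\int g\ge\Big(\int f\Big)^{1-\lambda}\Big(\int g\Big)^\lambda ,
\]
where the last step is the weighted AM-GM inequality.

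\textbf{Inductive step.} Write points of $\mathbb R^n$ as $(x',s)$ with $x'\in\mathbb R^{n-1}$ and $s\in\mathbb R$. Define
\[
F(s)=\int_{\mathbb R^{n-1}} f(x',s)\,dx',\qquad G(s)=\int_{\mathbb R^{n-1}} g(x',s)\,dx',\qquad H(s)=\int_{\mathbb R^{n-1}} h(x',s)\,dx' .
\]
These are measurable by Fubini's theorem. Fix $s_0,s_1$ and set $s=(1-\lambda)s_0+\lambda s_1$. The slice functions $f(\cdot,s_0)$, $g(\cdot,s_1)$ and $h(\cdot,s)$ satisfy the $(n-1)$-dimensional hypothesis. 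The induction hypothesis therefore gives $H(s)\ge F(s_0)^{1-\lambda}G(s_1)^\lambda$. Applying the case $n=1$ to $F,G,H$, together with Fubini's theorem, gives the claim.

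\textbf{Main obstacle.} The delicate point is the one-dimensional step. There we must ensure the level sets are nonempty, which is where the normalization is used, and we must handle measurability of $(1-\lambda)A_t+\lambda B_t$. Both are resolved by the normalization above and by approximating $A_t,B_t$ from inside by compact sets, for which the sum is compact.
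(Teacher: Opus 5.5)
The paper does not prove this proposition. It is quoted as classical background, with the reader referred to Schneider and Villani, and to Gardner for the passage to Brunn--Minkowski.

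Your argument is the standard induction on dimension, essentially the proof in Gardner's survey, and it is correct. In dimension one, four ingredients combine:
the level-set inclusion $(1-\lambda)\{f\ge t\}+\lambda\{g\ge t\}\subset\{h\ge t\}$;
the elementary bound $|A+B|\ge|A|+|B|$ for nonempty compact sets;
inner regularity;
and the layer-cake formula.
Under the normalization $\|f\|_\infty=\|g\|_\infty=1$ they give the stronger linear inequality $\int h\ge(1-\lambda)\int f+\lambda\int g$. Scaling invariance plus AM--GM then yields the multiplicative form, and Fubini carries it to higher dimensions.

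The transport proof found in Villani's book works differently. It normalizes $f,g$ to probability densities and takes the Brenier (or Knothe) map $\nabla\varphi$ pushing $f\,dx$ to $g\,dx$. It then changes variables along $(1-\lambda)x+\lambda\nabla\varphi(x)$, using $\det\bigl((1-\lambda)I+\lambda D^2\varphi\bigr)\ge(\det D^2\varphi)^{\lambda}$ and the Monge--Ampère relation $f=g(\nabla\varphi)\det D^2\varphi$. That route handles all dimensions at once and is closer to the Monge--Ampère theme of the paper, but it needs existence and a.e.\ regularity of the transport map. Yours needs nothing beyond Fubini and one-dimensional measure theory.

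One point to tighten is the claim that $F,G,H$ are measurable by Fubini. For merely Lebesgue measurable functions, slices are measurable only for a.e.\ $s$, so $F,G,H$ are defined only almost everywhere. Your one-dimensional step, however, uses the hypothesis for every pair $(s_0,s_1)$. The fix is to set $F=G=0$ and $H=+\infty$ on the exceptional null sets. Equivalently, first replace $f,g$ by Borel minorants and $h$ by a Borel majorant, each equal a.e.\ to the original. Either change leaves all integrals unchanged and preserves the hypothesis, and your one-dimensional argument works verbatim for $[0,\infty]$-valued $H$.
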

The Prékopa-Leindler inequality can quickly imply the Brunn-Minkowksi inequality (see \cite{Gardner}).

Lutwak, Yang and Zhang \cite{Lutwak-Yang-Zhang} established the following $L_p$ Brunn-Minkowski inequality for
compact sets.
\begin{prop}[$L_p$ Brunn-Minkowski inequality]
    For $p\ge 1$ and compact sets $K$ and $L$ in $\mathbb{R}^n$, we have
    \[
    V_n((1-\lambda)K+_p \lambda L)\ge V_n(K)^{1-\lambda}V_n(L)^{\lambda}.
    \]
    for all $\lambda\in[0,1]$.
\end{prop}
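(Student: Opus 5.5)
The plan is to reduce the statement to the classical Brunn-Minkowski inequality stated just above. The $L_p$ combination contains the ordinary Minkowski combination $(1-\lambda)K+\lambda L$ when $p\ge1$, and the product form then follows from the arithmetic--geometric mean inequality.

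\textbf{Step 1 (containment).} For convex bodies containing the origin, so that $h_K,h_L\ge0$, we use the support-function definition of $(1-\lambda)K+_p\lambda L$. The power-mean inequality with $p\ge1$ gives
\[
[(1-\lambda)h_K^p(u)+\lambda h_L^p(u)]^{1/p}\ge (1-\lambda)h_K(u)+\lambda h_L(u)=h_{(1-\lambda)K+\lambda L}(u)
\]
for all $u$. Hence every point $x\in(1-\lambda)K+\lambda L$ satisfies all the defining half-space constraints, and so
\[
(1-\lambda)K+\lambda L\subset(1-\lambda)K+_p\lambda L .
\]
For general compact sets we use the Lutwak--Yang--Zhang definition of the $L_p$ combination. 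It is the set of points
\[
(1-\lambda)^{1/p}(1-t)^{1/q}x+\lambda^{1/p}t^{1/q}y,\qquad x\in K,\ y\in L,\ t\in[0,1],
\]
where $1/p+1/q=1$. Choosing $t=\lambda$ gives coefficients $(1-\lambda)$ and $\lambda$, so the same containment holds. For $p=1$ the two sets coincide.

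\textbf{Step 2 (classical inequality).} The set $(1-\lambda)K+\lambda L$ is compact, hence measurable, so the classical Brunn-Minkowski inequality applies. Combined with monotonicity of volume and Step 1, it gives
\[
V_n((1-\lambda)K+_p\lambda L)^{1/n}\ge (1-\lambda)V_n(K)^{1/n}+\lambda V_n(L)^{1/n}.
\]

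\textbf{Step 3 (AM--GM).} The weighted AM--GM inequality gives
\[
(1-\lambda)V_n(K)^{1/n}+\lambda V_n(L)^{1/n}\ge V_n(K)^{(1-\lambda)/n}V_n(L)^{\lambda/n}.
\]
Raising both sides to the $n$-th power yields the claim.

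\textbf{Degenerate cases.} The cases $\lambda\in\{0,1\}$ and the case where $V_n(K)$ or $V_n(L)$ is zero are trivial, since the right-hand side is then $0$ or the inequality is an identity.

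\textbf{Main obstacle.} The only delicate point is Step 1, and specifically which definition of $+_p$ is used for nonconvex compact sets. Once that definition is fixed, the containment is the elementary choice $t=\lambda$, respectively the power-mean inequality. The rest follows directly from the Brunn-Minkowski proposition stated earlier.
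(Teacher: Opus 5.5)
Your proof is correct, up to trivial points such as measurability of $(1-\lambda)K+_p\lambda L$, which is compact (a continuous image of $K\times L\times[0,1]$) or convex. The paper gives no proof of this proposition; it only quotes it from Lutwak--Yang--Zhang. The result established there is the stronger Brunn--Minkowski--Firey inequality
\[
V_n((1-\lambda)K+_p\lambda L)^{p/n}\ge(1-\lambda)V_n(K)^{p/n}+\lambda V_n(L)^{p/n},
\]
from which the stated product form follows by AM--GM. The standard proof of that stronger form starts from the same containment you use, in the general form
\[
(1-t)^{1/q}(1-\lambda)^{1/p}K+t^{1/q}\lambda^{1/p}L\subset(1-\lambda)K+_p\lambda L .
\]
It then applies the classical Brunn--Minkowski inequality and optimizes over $t$. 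By the equality case of H\"older, $\max_{t\in[0,1]}\bigl[(1-t)^{1/q}A+t^{1/q}B\bigr]=(A^p+B^p)^{1/p}$, where $A=(1-\lambda)^{1/p}V_n(K)^{1/n}$ and $B=\lambda^{1/p}V_n(L)^{1/n}$.

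Your fixed choice $t=\lambda$ is more elementary. It suffices because the right-hand side of the statement does not involve $p$: the statement is just the multiplicative form of the classical inequality, combined with the containment of the ordinary Minkowski combination in the $L_p$ one. So your route buys simplicity, while the optimized choice of $t$ is what captures the genuinely $L_p$ improvement.

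One small inaccuracy: at $\lambda\in\{0,1\}$ the inequality need not be an identity under the Lutwak--Yang--Zhang definition with $p>1$. For example, $0\cdot K+_p 1\cdot L=\{sy:\ s\in[0,1],\ y\in L\}$ can strictly contain $L$. However, your Step 1 with $t=\lambda$ already covers the endpoints, so no separate treatment is needed.
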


Wu \cite{Lutwak-Yang-Zhang} established the following Prékopa-Leindler type inequality which can imply the $L_p$ Brunn-Minkowski inequality.
\begin{prop}[$L_p$ version of the Prékopa-Leindler inequality]\label{Lp PL}
  Let $p\ge 1$, $s,\mu,\omega>0,$ and $f,g,h:\mathbb{R}^n\to [0,\infty)$ be integrable functions with nonepmty supports. If for all $x\in \operatorname{supp}f, y\in\operatorname{supp}g$ and $\lambda\in [0,1]$,
  \begin{equation*}\label{Lp PL condition}
      h((1-\lambda)^{\frac{1}{q}}\mu^\frac{1}{p} x+\lambda^{\frac{1}{q}}\omega^\frac{1}{p} y)^{\frac{1}{s}}\geq (1-\lambda)^{\frac{1}{q}}\mu^\frac{1}{p} f(x)^{\frac{1}{s}}+\lambda^{\frac{1}{q}}\omega^\frac{1}{p} g(y)^{\frac{1}{s}},
      \end{equation*}
  where $q$ is the Hölder conjugate of $p$,
  then
  \begin{equation*}\label{Lp PL conclusion}
      \left(\int_{\mathbb{R}^n}  h~dx\right)^{\frac{p}{n+s}} \geq \mu\left(\int_{\mathbb{R}^n}  f~dx\right)^{\frac{p}{n+s}}+\omega\left(\int_{\mathbb{R}^n}  g~dx\right)^{\frac{p}{n+s}}.
  \end{equation*}
\end{prop}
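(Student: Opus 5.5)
The plan is to reduce the statement to the classical Borell--Brascamp--Lieb inequality (the $L_1$, or "$s$-concave", form of the Prékopa--Leindler inequality). After that, a one-parameter optimization in $\lambda$ produces the $L_p$ sum on the right-hand side. Write $F=\int f$, $G=\int g$, $r=n+s$, and for fixed $\lambda\in(0,1)$ put
\[
a=(1-\lambda)^{\frac1q}\mu^{\frac1p},\qquad b=\lambda^{\frac1q}\omega^{\frac1p},\qquad t=\frac{b}{a+b}\in(0,1).
\]
Then $ax+by=(a+b)\bigl((1-t)x+ty\bigr)$. Define the rescaled function $H(z)=(a+b)^{-s}h((a+b)z)$. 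Dividing the hypothesis by $a+b$, it becomes
\[
H\bigl((1-t)x+ty\bigr)^{\frac1s}\ge (1-t)f(x)^{\frac1s}+t\,g(y)^{\frac1s}\qquad (x\in\operatorname{supp}f,\ y\in\operatorname{supp}g).
\]

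This is exactly the hypothesis of the Borell--Brascamp--Lieb inequality with exponent $1/s>0\ge -1/n$. That inequality gives
\[
\Bigl(\int H\Bigr)^{\frac1{n+s}}\ge (1-t)F^{\frac1{n+s}}+tG^{\frac1{n+s}}.
\]
A change of variables gives $\int H=(a+b)^{-n-s}\int h$. Multiplying through by $a+b$ therefore yields, for every $\lambda\in(0,1)$,
\[
\Bigl(\int h\Bigr)^{\frac1r}\ge (1-\lambda)^{\frac1q}A+\lambda^{\frac1q}B,\qquad A:=\mu^{\frac1p}F^{\frac1r},\quad B:=\omega^{\frac1p}G^{\frac1r}.
\]
The hypothesis is required only on the supports, and this is precisely the form in which Borell--Brascamp--Lieb is usually stated, so nothing is lost in the rescaling.

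Next we optimize in $\lambda$. By Hölder's inequality for the pair of exponents $(q,p)$,
\[
(1-\lambda)^{\frac1q}A+\lambda^{\frac1q}B\le (A^p+B^p)^{\frac1p},
\]
and equality holds for $\lambda=B^p/(A^p+B^p)$. Choosing this $\lambda$ (which lies in $(0,1)$ when $F,G>0$) and raising both sides to the power $p$ gives
\[
\Bigl(\int h\Bigr)^{\frac pr}\ge A^p+B^p=\mu F^{\frac pr}+\omega G^{\frac pr},
\]
which is the claim.

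Two degenerate situations need separate checks. First, if $F=0$ or $G=0$, we take $\lambda\in\{0,1\}$ directly in the hypothesis; the hypothesis then gives $h(\mu^{1/p}x)\ge \mu^{s/p}f(x)$ (or the analogue with $g$), and a direct change of variables concludes. Second, when $p=1$ we have $q=\infty$, so $a=\mu$ and $b=\omega$ are independent of $\lambda$, and the second step already gives the result with no optimization needed.

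The main obstacle is the Borell--Brascamp--Lieb step, since the paper only records the multiplicative Prékopa--Leindler inequality. I would cite it as known (Borell; Brascamp--Lieb; see Gardner's survey). If a self-contained argument is wanted, I would prove it by the standard one-dimensional transport argument followed by induction on the dimension $n$, exactly as for Prékopa--Leindler. The exponent bookkeeping then gives the power $1/(n+s)$ because of the $s$-homogeneity in the hypothesis.
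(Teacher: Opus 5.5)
Your argument is correct. The paper gives no proof of this proposition: it quotes it from Wu and uses it as a black box, so there is no in-paper argument to compare with, and your derivation supplies one.

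The bookkeeping checks out:
\begin{itemize}
\item Rescaling $h$ by $a+b$ turns the hypothesis at a fixed $\lambda\in(0,1)$ into the Borell--Brascamp--Lieb hypothesis with exponent $\gamma=1/s$ and weight $t=b/(a+b)$.
\item Since $\gamma/(n\gamma+1)=1/(n+s)$ and $\int H=(a+b)^{-(n+s)}\int h$, you get $(\int h)^{1/(n+s)}\ge(1-\lambda)^{1/q}A+\lambda^{1/q}B$.
\item The H\"older equality case at $\lambda=B^p/(A^p+B^p)$ turns the right-hand side into $(A^p+B^p)^{1/p}$, which is the claim.
\item With the usual convention $M_\gamma(a,b,t)=0$ when $ab=0$, restricting the hypothesis to the supports costs nothing, as you say.
\end{itemize}

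This is the same device by which the $L_p$ Brunn--Minkowski inequality for compact sets is obtained from the classical one. Compared with the citation, it buys transparency. It shows the $L_p$ statement has no analytic content beyond the classical BBL inequality plus an elementary optimization. It also shows the hypothesis is used at only one value of $\lambda$: the optimal one, or an endpoint in the degenerate cases. By contrast, the paper checks the hypothesis for every $\lambda$ in Lemma~\ref{denominator}. The citation is, of course, shorter.

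One small point concerns the degenerate case when $p=1$. Then $1/q=0$, so $\lambda^{1/q}$ at $\lambda=0$ is ambiguous. The hypothesis should be read as $h(\mu x+\omega y)^{1/s}\ge\mu f(x)^{1/s}+\omega g(y)^{1/s}$, not as a statement about $h(\mu x)$. So when $G=0$, fix some $y_0\in\operatorname{supp}g$ and use translation invariance of Lebesgue measure to get $\int h\ge\mu^{n+s}F$. That gives the same conclusion.
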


The $L_p$ version of the Prékopa-Leindler inequality plays an important role when dealing with the denominator in \eqref{eigen}.

\section{An identity of generalized Monge-Ampère energy with respect to the $L_p$ type convolution}\label{Lp type convolution}
Let $\mathcal{K}_s$ be the class of smooth strictly convex compact subsets containing the origin as an interior point in $\mathbb{R}^n$, and $\mathcal{U}_s$ be the class of open subsets whose closure is in $\mathcal{K}_s$. In addition, we define the following class of functions:
\[
\mathcal{C}(\Omega):=\{u:\bar\Omega\to\mathbb{R}: u \in C^1(\bar\Omega)~\text{is strictly convex},\ u|_{\partial\Omega}=0\}
\]
for a domain $\Omega$ in $\mathcal{U}_s$. From now on we fix $k\ge 0$, and denote $p=k+1\ge 1$.

For any $\Omega_0,\Omega_1\in \mathcal{U}_s$ and $u_0\in\mathcal{C}(\Omega_0),u_1\in\mathcal{C}(\Omega_1)$, define the weighted $L_p$ type convolution $u_\alpha$ of $u_0$ and $u_1$ for some $\alpha\in[0,1]$ by giving the Legendre transform of $u_\alpha$:
\[
v_\alpha(\xi):=\left[ (1-\alpha)(u_0^*(\xi))^p+\alpha(u_1^*(\xi))^p\right]^{\frac 1p}.
\]
Then $u_\alpha:=v_\alpha^*=u_\alpha^{**}$. For convenience we also denote $v_0=u_0^*,\ v_1=u_1^*$ (i.e. $v_0,v_1$ are Legendre transforms of $u_0$ and $u_1$ respectively). We remind the readers to distinguish the notation $u^*(\xi)$ and $u^\star(x)$. The former denotes the Legendre transform, and the latter denotes the Legendre transform of $u$ evaluated at the point $Du(x)$. They are different representations of the same quantity in position space and momentum space.

Next we discuss some properties of the $L_p$ type convolution. First we verify $v_\alpha$ is convex, which ensures that its Legendre transform is a well-defined convex function.
\begin{lem}
    $v_\alpha$ is convex in $\mathbb{R}^n$.
\end{lem}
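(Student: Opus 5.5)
The plan is to write $v_\alpha$ as the $\ell^p$-norm on $\mathbb{R}^2$ applied to the pair $\bigl((1-\alpha)^{1/p}v_0,\ \alpha^{1/p}v_1\bigr)$. The norm is convex and monotone on the positive quadrant, so $v_\alpha$ is convex as a composition; Minkowski's inequality does the actual work. Everything hinges on $v_0,v_1$ being finite, convex and strictly positive on all of $\mathbb{R}^n$, so that the $p$-th powers and $p$-th roots are meaningful and monotone.

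\textbf{Step 1: finiteness and positivity of $v_i$ for $i=0,1$.}
We regard $u_i$ as $+\infty$ outside $\bar\Omega_i$. Then
\[
v_i(\xi)=\sup_{x\in\bar\Omega_i}\{x\cdot\xi-u_i(x)\},
\]
which is finite because $\bar\Omega_i$ is compact and $u_i$ is continuous. As a supremum of affine functions of $\xi$, $v_i$ is convex on $\mathbb{R}^n$.

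For positivity, note that $u_i\in\mathcal{C}(\Omega_i)$ is strictly convex with $u_i=0$ on $\partial\Omega_i$. Hence $u_i<0$ in $\Omega_i$: the maximum of a convex function lies on the boundary, and strict convexity rules out $u_i\equiv 0$ along any segment. Since $0$ is an interior point, taking $x=0$ gives
\[
v_i(\xi)\ge -u_i(0)>0\quad\text{for all }\xi.
\]

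\textbf{Step 2: convexity.}
Fix $\xi,\eta\in\mathbb{R}^n$ and $t\in[0,1]$. By convexity of $v_i$ and positivity,
\[
0<v_i(t\xi+(1-t)\eta)\le t v_i(\xi)+(1-t)v_i(\eta).
\]
Since $s\mapsto s^p$ is increasing on $[0,\infty)$, this gives
\[
v_\alpha(t\xi+(1-t)\eta)\le \Bigl[(1-\alpha)\bigl(tv_0(\xi)+(1-t)v_0(\eta)\bigr)^p+\alpha\bigl(tv_1(\xi)+(1-t)v_1(\eta)\bigr)^p\Bigr]^{1/p}.
\]
The right-hand side is $\|tA+(1-t)B\|_{\ell^p}$, where
\[
A=\bigl((1-\alpha)^{1/p}v_0(\xi),\ \alpha^{1/p}v_1(\xi)\bigr),\qquad B=\bigl((1-\alpha)^{1/p}v_0(\eta),\ \alpha^{1/p}v_1(\eta)\bigr).
\]
Minkowski's inequality in $\ell^p(\mathbb{R}^2)$, valid since $p\ge1$, bounds this by
\[
t\|A\|_p+(1-t)\|B\|_p=t v_\alpha(\xi)+(1-t)v_\alpha(\eta).
\]
This proves convexity. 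The endpoint cases $\alpha=0$ and $\alpha=1$ are trivial.

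\textbf{Main obstacle.}
The only delicate point is Step 1, the positivity of $v_i$. Without it, $(v_i)^p$ could fail to be monotone in $v_i$, or even be undefined for non-integer $p$, and the composition argument breaks. Positivity is exactly where the hypotheses are used: the origin is an interior point and $u_i<0$ in $\Omega_i$. As a byproduct, $v_\alpha$ is finite, convex and positive, so $u_\alpha=v_\alpha^*$ is a well-defined closed convex function.
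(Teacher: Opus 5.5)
Your proof is correct and follows the same route as the paper. The paper writes $v_\alpha=g(f_0,f_1)$ with $g(a,b)=(a^p+b^p)^{1/p}$, $f_0=(1-\alpha)^{1/p}v_0$, $f_1=\alpha^{1/p}v_1$, and concludes from Minkowski's inequality. You also spell out two points the paper passes over with ``clearly'' and ``immediately'': the positivity $v_i\ge -u_i(0)>0$ (nonnegativity would already suffice), and the monotonicity of $s\mapsto s^p$ on $[0,\infty)$, which is what lets you insert the convexity bounds for $v_0,v_1$ before applying Minkowski.
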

\begin{proof}
    Let $g(a,b)=(a^p+b^p)^{\frac1p}$ for $a,b\ge 0$. The Minkowski inequality implies that
    \[
    g(ta_1+(1-t)a_2,tb_1+(1-t)b_2)\le tg(a_1,b_1)+(1-t)g(a_2,b_2), \quad a_1,a_2,b_1,b_2\ge 0,~t\in[0,1].
    \]
    Set $f_0(\xi)=(1-\alpha)^{\frac 1p}v_0(\xi),\ f_1(\xi)=\alpha^{\frac1p}v_1(\xi)$. Clearly $f_0$ and $f_1$ are both nonnegative and convex. Note that $v_\alpha=g(f_0,f_1)$, which implies immediately that $v_\alpha$ is convex by definition of convex functions.
\end{proof}

The convexity of $v_\alpha$ implies $u_\alpha$, the Legendre transform of $v_\alpha$, is a well-defined convex function in $\mathbb{R}^n$. We focus on the effective domain of $u_\alpha$.
\begin{lem}\label{eff domain}
    Let $\Omega_\alpha$ be the interior of the set $\{x\in\mathbb{R}^n:u_\alpha(x)\le 0\}$. Then
    \[
    \bar\Omega_\alpha=(1-\alpha)\bar\Omega_0+_p \alpha\bar\Omega_1.
    \]
    Equivalently, the support function of $\bar\Omega_\alpha$ is
    \[
    h_\alpha(\xi)=\left[(1-\alpha)h_0(\xi)^p+\alpha h_1(\xi)^p\right]^{\frac1p},
    \]
    where $h_0$ and $h_1$ are support functions of $\bar\Omega_0$ and $\bar\Omega_1$ respectively.

    Furthermore, $u_\alpha=0$ on $\partial \Omega_\alpha$.
\end{lem}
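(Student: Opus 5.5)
The key observation is that the sublevel set $\{u\le 0\}$ of a closed convex function is governed by its Legendre transform through the support function. Concretely, for $u_0\in\mathcal{C}(\Omega_0)$, extended by $+\infty$ outside $\bar\Omega_0$, we have $u_0\le 0$ on $\bar\Omega_0$ with $u_0=0$ on $\partial\Omega_0$. Hence
\[
v_0(\xi)=\sup_{x\in\bar\Omega_0}\{x\cdot\xi-u_0(x)\}\ge h_0(\xi),
\]
since taking the supremum only over boundary points already gives $h_0(\xi)$ (the supremum of the linear function over $\bar\Omega_0$ is attained on $\partial\Omega_0$). In particular $v_0\ge h_0\ge 0$, because the origin is interior, so $v_\alpha$ is well defined. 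I plan to prove the identity
\[
\{x: u_\alpha(x)\le 0\}=\{x:\ x\cdot\xi\le v_\alpha(\xi)\ \forall \xi\}
\]
and then compare $v_\alpha$ with $h_\alpha$ asymptotically. The identity itself is immediate from $u_\alpha=v_\alpha^*$: $u_\alpha(x)\le 0$ if and only if $x\cdot\xi-v_\alpha(\xi)\le 0$ for all $\xi$.

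Next I compare $v_\alpha$ with $h_\alpha$. From $v_i\ge h_i$ we get $v_\alpha\ge h_\alpha$, so $(1-\alpha)\bar\Omega_0+_p\alpha\bar\Omega_1\subset\{u_\alpha\le 0\}$, by the Böröczky et al.\ description (for $p\ge1$, $h_\alpha$ is a genuine support function). For the reverse inclusion I use homogeneity. Because $u_i$ is bounded on $\bar\Omega_i$, say $|u_i|\le M$, we have $h_i\le v_i\le h_i+M$. Therefore $v_\alpha(t\xi)\le h_\alpha(t\xi)+M = t h_\alpha(\xi)+M$ for $t>0$, using Minkowski's inequality for the $\ell^p$ norm $(a,b)\mapsto((1-\alpha)a^p+\alpha b^p)^{1/p}$. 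Now suppose $u_\alpha(x)\le 0$. Then $t\,x\cdot\xi\le t h_\alpha(\xi)+M$ for every $t>0$, and letting $t\to\infty$ gives $x\cdot\xi\le h_\alpha(\xi)$. Thus $\{u_\alpha\le0\}=(1-\alpha)\bar\Omega_0+_p\alpha\bar\Omega_1=:K_\alpha$, a convex body with nonempty interior containing the origin. Taking interiors and closures, and using that a convex body is the closure of its interior, gives $\bar\Omega_\alpha=K_\alpha$ and the stated support function.

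For the boundary claim I first show $u_\alpha\ge 0$ outside $\Omega_\alpha$, or more precisely $u_\alpha(x)\ge 0$ whenever $x\notin\operatorname{int}K_\alpha$. Take $x\in\partial K_\alpha$ and an outer normal $\xi$ with $x\cdot\xi=h_\alpha(\xi)$. Then
\[
u_\alpha(x)\ge \sup_{t>0}\{t\,x\cdot\xi - v_\alpha(t\xi)\}\ge \lim_{t\to0^+}\bigl(t h_\alpha(\xi)-v_\alpha(t\xi)\bigr) = -v_\alpha(0).
\]
This route only yields $u_\alpha(x)\ge -v_\alpha(0)$, which is not enough, so I instead use the structure $v_\alpha\ge h_\alpha$ directly. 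Since $\{u_\alpha\le 0\}=K_\alpha$, it suffices to show $u_\alpha(x)\ge0$ on $\partial K_\alpha$; combined with $u_\alpha\le0$ there, this gives $u_\alpha=0$. By lower semicontinuity and convexity, $u_\alpha>0$ outside $K_\alpha$, and $u_\alpha$ is finite near $K_\alpha$ because $v_\alpha$ grows at most like $h_\alpha+M$. Moreover $u_\alpha(0)=-\inf v_\alpha\le -\inf h_\alpha\le 0$, and it is in fact strictly negative, since $v_\alpha>0$ and $v_\alpha$ is coercive so its infimum is attained and positive. A finite convex function that is $\le0$ on $K_\alpha$ and $>0$ outside must therefore equal $0$ on $\partial K_\alpha$ by continuity in the interior of its domain, provided $\operatorname{dom}u_\alpha$ is a neighborhood of $K_\alpha$.

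The main obstacle is exactly that last point: showing that $u_\alpha$ is finite, hence continuous, in a neighborhood of $K_\alpha$, so that a boundary point cannot have $u_\alpha<0$ with a jump to $+\infty$. My first approach is the growth bound $v_\alpha(\xi)\ge h_\alpha(\xi)$, which gives $u_\alpha(x)\le \sup_\xi\{x\cdot\xi-h_\alpha(\xi)\}$; this is only $0$ or $+\infty$ and so is useless for finiteness. I would therefore instead use the upper bound $v_\alpha\le h_\alpha+M$. Its Legendre transform gives $u_\alpha\ge -M$ on $K_\alpha$ and $u_\alpha=+\infty$ off $K_\alpha$, so in fact $\operatorname{dom}u_\alpha=K_\alpha$. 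The argument must then be reorganized: show directly that $u_\alpha(x)\ge0$ for $x\in\partial K_\alpha$. Writing $u_\alpha(x)=\sup_\xi\{x\cdot\xi-v_\alpha(\xi)\}$, I would evaluate along $t\xi$ with $\xi$ the normal at $x$ and $t\to\infty$. Here I need $v_i(t\xi)-t h_i(\xi)\to u_i$ evaluated at the face point, namely $-u_i(x_i)=0$ where $x_i\in\partial\Omega_i$ is the point with normal $\xi$. This holds because $u_i$ is $C^1$ up to the boundary and vanishes there, so that $v_i(t\xi)=t h_i(\xi)+o(1)$. Expanding the $p$-mean then gives $v_\alpha(t\xi)=t h_\alpha(\xi)+o(1)$, and hence $u_\alpha(x)\ge \lim_{t\to\infty}\bigl(t h_\alpha(\xi)-v_\alpha(t\xi)\bigr)=0$.
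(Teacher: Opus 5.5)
Your proof is correct, but for the set identity it rests on a different key estimate than the paper's. The paper proves the exact coincidence $v_i=h_i$ for $|\xi|>R$. Since $u_i\in C^1(\bar\Omega_i)$, once $|\xi|>\sup|Du_i|$ the maximizer of $x\cdot\xi-u_i(x)$ over $\bar\Omega_i$ cannot be interior, so it lies on $\partial\Omega_i$, where $u_i=0$. Hence $v_\alpha=h_\alpha$ outside a ball. Both of you get $K_\alpha\subset\{u_\alpha\le 0\}$ from $v_\alpha\ge h_\alpha$. The paper then gets the reverse inclusion and $u_\alpha\ge 0$ on $\partial\Omega_\alpha$ by testing along rays $t\xi$ against this exact identity.

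You instead use the sandwich $h_i\le v_i\le h_i+M$, which needs only that $u_i$ be bounded and vanish on $\partial\Omega_i$. You pass it to $h_\alpha\le v_\alpha\le h_\alpha+M$ by monotonicity and Minkowski's inequality for the weighted $p$-mean. The constant $M$ disappears after dividing by $t$, so you get $\{u_\alpha\le 0\}=K_\alpha$, and even $\operatorname{dom}u_\alpha=K_\alpha$, without any gradient bound.

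As you noticed, $M$ is too crude for the boundary claim. Your final step, $v_i(t\xi)-th_i(\xi)\to 0$ along the normal ray, is a weakened form of the paper's identity. In a write-up you should justify it explicitly, in one of two ways:
\begin{itemize}
\item as in the paper, noting the difference is exactly $0$ once $t|\xi|>\sup|Du_i|$; or
\item by a compactness argument, which shows that continuity of $u_i$ on $\bar\Omega_i$ with zero boundary values already suffices.
\end{itemize}

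You should also delete the abandoned middle passage. The assertion there that $u_\alpha$ is finite in a neighborhood of $K_\alpha$ is false, as your own bound $u_\alpha\ge (h_\alpha+M)^*$ shows.

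In short, your route is more robust under weaker regularity. The paper's exact identity is the stronger statement, and the paper reuses it later when it asserts that $v_\alpha$ is $1$-homogeneous outside a ball in order to conclude $u_\alpha\in C^1(\bar\Omega_\alpha)$.
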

\begin{proof}
    It suffices to show that 
    \[
    \bar\Omega_\alpha=\{x\in\mathbb{R}^n:\xi\cdot x\le h_\alpha(\xi) ~\text{for all }\xi\in\mathbb{R}^n\},
    \]
    where $h_\alpha(\xi)=\left[(1-\alpha)h_0(\xi)^p+\alpha h_1(\xi)^p\right]^{\frac1p}$. We first show the Legendre transform and the support function are equal outside a ball centered at the origin. For $i\in\{0,1\}$, we have
    \[
     v_i(\xi)=\sup_{x\in\bar\Omega_i}\{x\cdot \xi-u_i(x)\}.
    \]
    Since $u_i\in C^1(\bar\Omega_i)$, there exists $R>0$ such that for $|\xi|>R$, the supremum above is attained at some boundary point on $\partial\Omega_i$. Moreover, we have assumed $u_i=0$ on $\partial\Omega_i$, so
    \[
    v_i(\xi)=\sup_{x\in\partial\Omega_i}x\cdot \xi=h_i(\xi).
    \]
    Hence we obtain
    \[
    v_\alpha(\xi)=\left[(1-\alpha)h_0(\xi)^p+\alpha h_1(\xi)^p\right]^{\frac1p}=h_\alpha(\xi)\quad \mathrm{for}\ |\xi|>R.
    \]

    Next we show that $\bar\Omega_\alpha$ is equal to the $L_p$ Minkowski addition. Suppose $x\in\mathbb{R}^n$ satisfies $\xi\cdot x\le h_\alpha(\xi)$ for all $\xi\in\mathbb{R}^n$. Note that $v_\alpha\ge h_\alpha$, so
    \[
    \xi\cdot x-v_\alpha(\xi)\le 0 \quad \text{for all } \xi\in\mathbb{R}^n.
    \]
    Taking the supremum over $\xi$ gives $x\in\bar\Omega_\alpha$.

    Conversely assume $x\in\bar\Omega_\alpha$. Suppose for contradiction that $x\cdot \xi_0>h(\xi_0)$ for some $\xi_0$. Then for all $t>0$ it holds
    \[
    (t\xi_0)\cdot x>th_\alpha(\xi_0)=h_\alpha(t\xi_0).
    \]
    When $t$ is large enough, we have $h_\alpha(t\xi_0)=v_\alpha(t\xi_0)$, so the inequality above turns out to be 
    \[
    t\xi_0\cdot x-v_\alpha(t\xi_0)>0.
    \]
    Letting $t\to\infty$ contradicts the assumption $x\in\bar\Omega_\alpha$.

    Finally we will show $u_\alpha=0$ on $\partial\Omega_\alpha$. Fix $x\in \partial \Omega_\alpha$, it suffices to show that $u_\alpha(x)\ge 0$. We choose $\xi\in\mathbb{S}^{n-1}$ such that $\xi\cdot x=h_\alpha(\xi)$. For sufficiently large $t>R>0$, we have shown that 
    \[
    v_\alpha(t\xi)=th_\alpha(\xi).
    \]
    Now 
    \[
    u_\alpha(x)=\sup_{\eta\in\mathbb{R}^n}\{\eta\cdot x-v_\alpha(\eta)\}\ge \sup_{t>R}\{t\xi\cdot x-th_\alpha(\xi)\}=0.
    \]
\end{proof}

Lemma \ref{eff domain} implies that $\Omega_\alpha=\{u_\alpha<0\}$, $u_\alpha=0$ on $\partial \Omega_\alpha$, and $u_\alpha=\infty$ outside $\bar\Omega_\alpha$. Since $u_0$ and $u_1$ are strictly convex on $\Omega_0$ and $\Omega_1$ respectively, by Lemma \ref{C1} we deduce $v_0,v_1\in C^1(\mathbb{R}^n)$, thus $v_\alpha\in C^1(\mathbb{R}^n)$ by definition. Then Lemma \ref{strictly convex} tells us $u_\alpha$ is also strictly convex. We will analyze the regularity of $u_\alpha$.
\begin{lem}\label{int C1} For each $x_0\in \Omega_\alpha$, there exists a unique $\xi\in\mathbb{R}^n$ such that $\xi\in\partial u_\alpha(x_0)$. Therefore $u_\alpha\in C^1(\Omega_\alpha)$.
\end{lem}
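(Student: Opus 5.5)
We need to show: for $x_0\in\Omega_\alpha$, the subdifferential $\partial u_\alpha(x_0)$ is nonempty and a singleton. Nonemptiness is standard, since $u_\alpha$ is a finite convex function near the interior point $x_0$ of its effective domain. For uniqueness, the natural route is through the dual picture: $\xi\in\partial u_\alpha(x_0)$ iff $x_0\in\partial v_\alpha(\xi)$ (Lemma \ref{Fenchel}, using that $v_\alpha$ is closed, being finite and convex on $\mathbb{R}^n$, so $u_\alpha^*=v_\alpha$). Since $v_\alpha\in C^1(\mathbb{R}^n)$, this means $x_0=Dv_\alpha(\xi)$. So the task is to show that the gradient map $Dv_\alpha$ is injective on the set $\{\xi: Dv_\alpha(\xi)\in\Omega_\alpha\}$, i.e. that $v_\alpha$ is strictly convex in the relevant region.

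I would first compute $Dv_\alpha$ explicitly. Where $v_\alpha>0$,
\[
Dv_\alpha(\xi)=v_\alpha(\xi)^{1-p}\left[(1-\alpha)v_0(\xi)^{p-1}Dv_0(\xi)+\alpha v_1(\xi)^{p-1}Dv_1(\xi)\right].
\]
Note that $v_i\ge -\min u_i>0$ everywhere, since $u_i<0$ somewhere and $0\in\Omega_i$, so $v_i(\xi)\ge 0\cdot\xi-u_i(0)>0$. Hence $v_\alpha>0$ and the formula is valid everywhere. Now suppose $\xi_1\ne\xi_2$ both lie in $\partial u_\alpha(x_0)$. Then $u_\alpha$ is affine on the segment joining them in the dual sense, i.e. $v_\alpha$ is affine on $[\xi_1,\xi_2]$ with gradient identically $x_0$ there. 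Equivalently, $\partial u_\alpha(x_0)$ is a convex set containing the segment, and $v_\alpha(\xi)=\xi\cdot x_0-u_\alpha(x_0)$ along it.

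The key step is to rule this out using strict convexity of the ingredients. From the proof of the convexity lemma, $v_\alpha=g(f_0,f_1)$ with $g$ the $\ell_p$ norm. Affineness of $v_\alpha$ on the segment forces equality both in the convexity of the $f_i$ and in the Minkowski inequality for $g$.
\begin{itemize}
\item Equality in Minkowski for $p>1$ forces $(f_0,f_1)$ at the two endpoints to be proportional.
\item Equality in the convexity of $f_i$ along the segment would force each $v_i$ to be affine on $[\xi_1,\xi_2]$.
\end{itemize}
Affineness of $v_i$ on a segment means $Dv_i$ is constant there, say $Dv_i\equiv y_i\in\bar\Omega_i$. The whole segment then lies in $\partial u_i(y_i)$, which is impossible if $y_i\in\Omega_i$ by Lemma \ref{int C1}'s analogue for $u_i\in C^1(\bar\Omega_i)$. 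So $y_i\in\partial\Omega_i$ must hold. In the homogeneous-degenerate case, the proportionality condition is where $p>1$ enters.

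I expect the main obstacle to be exactly this boundary case: showing that $x_0$, a combination built from boundary points $y_0\in\partial\Omega_0$, $y_1\in\partial\Omega_1$, cannot lie in the interior of $\Omega_\alpha$. My plan is to show instead that $u_\alpha(x_0)=0$, contradicting $x_0\in\Omega_\alpha=\{u_\alpha<0\}$. Since $u_i(y_i)=0$, Lemma \ref{Fenchel} gives $v_i(\xi)=\xi\cdot y_i$ on the segment. Then $v_\alpha(\xi)=[(1-\alpha)(\xi\cdot y_0)^p+\alpha(\xi\cdot y_1)^p]^{1/p}$, which is $1$-homogeneous in these directions. Euler's relation then yields $\xi\cdot Dv_\alpha(\xi)=v_\alpha(\xi)$, i.e. $u_\alpha(x_0)=\xi\cdot x_0-v_\alpha(\xi)=0$. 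A cleaner alternative I would try first is to avoid equality cases altogether. The argument runs as follows:
\begin{itemize}
\item If $x_0=Dv_\alpha(\xi)\in\Omega_\alpha$, then $u_\alpha(x_0)=\xi\cdot Dv_\alpha(\xi)-v_\alpha(\xi)<0$.
\item Plugging in the explicit gradient, this expression equals $v_\alpha^{1-p}\sum_i c_i v_i^{p-1}(\xi\cdot Dv_i-v_i)=v_\alpha^{1-p}\sum_i c_i v_i^{p-1}u_i(Dv_i(\xi))$.
\item Hence some $Dv_i(\xi)\in\Omega_i$, so $v_i$ is locally strictly convex near $\xi$ by Lemma \ref{strictly convex} applied to $u_i$.
\end{itemize}
Combining this local strict convexity of one $v_i$ with the Minkowski-type convexity of $g$ then gives strict convexity of $v_\alpha$ along any segment through $\xi$. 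This contradicts $\xi_1\ne\xi_2$.
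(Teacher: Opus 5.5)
Your proposal is correct and follows the paper's proof. You reduce to uniqueness of solutions of $Dv_\alpha(\xi)=x_0$, observe that two solutions make $v_\alpha$ (hence $v_0$ and $v_1$) affine on $[\xi_1,\xi_2]$, and reach a contradiction because some $Dv_i(\xi_1)$ lies in the open set $\Omega_i$, where $u_i$ is differentiable. The only difference is how you show $Dv_i(\xi_1)\in\Omega_i$: you use the identity $u_\alpha(Dv_\alpha(\xi))=v_\alpha^{1-p}\sum_i c_i v_i^{p-1}u_i(Dv_i(\xi))$ (the pointwise content of Lemma \ref{linear}), whereas the paper uses the more tersely justified comparison $v_\alpha(\xi_1)>h_\alpha(\xi_1)$, so your version makes that step fully explicit.
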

\begin{proof}
    By Lemma \ref{Fenchel}, it suffices to show that the solution of the equation $Dv_\alpha(\xi)=x_0$ is unique. Suppose for contradiction that there exist $\xi_1\ne \xi_2$ with $Dv_\alpha(\xi_1)=Dv_\alpha(\xi_2)=x_0\in\Omega_\alpha$, which implies $v_\alpha$ is an affine function on the segment $[\xi_1,\xi_2]$. By definition $v_\alpha=[(1-\alpha)v_0^p+\alpha v_1^p]^{\frac1p}$, then we deduce $v_0$ and $v_1$ are also affine functions on the segment $[\xi_1,\xi_2]$.

    We claim that $v_i$ is strictly convex in $Du_i(\Omega_i)$ for $i=0,1$. In fact, for $\zeta_1,\zeta_2\in Du_i(\Omega_i)$ with $\zeta_1\ne \zeta_2$, set $y_1=Dv_i(\zeta_1),~ y_2=Dv_i(\zeta_2)$. The strict convexity of $u_i$ implies that the map $Du_i:\Omega_i\to Du_i(\Omega_i)$ is bijective, whose inverse is $Dv_i:Du_i(\Omega_i)\to\Omega_i$. Thus we have $y_1\ne y_2$, and $\zeta_1=Du_i(y_1)~,\zeta_2=Du_i(y_2)$. Therefore,
    \[
    (\zeta_1-\zeta_2)\cdot (Dv_i(\zeta_1)-Dv_i(\zeta_2))=(Du_i(y_1)-Du_i(y_2))\cdot (\zeta_1-\zeta_2)>0,
    \]
    which implies our claim.
    
    Now, since $Dv_\alpha(\xi_1)=x_0\in \Omega_\alpha$, we have $\xi_1=Du_\alpha(x_0)$ by Lemma \ref{Fenchel}. This implies that the function $x\mapsto x\cdot \xi_1-u_\alpha(x)$ attains its maximum in the interior of $\Omega_\alpha$. Hence 
    \[
    v_\alpha(\xi_1)>h_\alpha(\xi_1).
    \]
    By the definition of $v_\alpha$ and $h_\alpha$, we can assume without loss of generality that $v_0(\xi_1)>h_0(\xi_1)$. Therefore, we conclude $\xi_1\in Du_0(\Omega_0)$. This contradicts the strict convexity of $v_0$ in $Du_0(\Omega_0)$.
\end{proof}

By the geometric interpretation of the Legendre transform, we deduce $v_0$ and $v_1$ are $1$-homogeneous functions outside a ball centered at the origin since the gradient of $u_0,u_1$ are bounded. The definition formula of $v_\alpha$ implies that it is also a $1$-homogeneous function outside the ball. Hence $u_\alpha\in C^1(\bar\Omega_\alpha)$. Combining all the preceding discussions, we have proved the $L_p$ type convolution $u_\alpha$ lies in the class $\mathcal{C}(\Omega_\alpha)$.

Next we apply Lemma \ref{change variable} to rewrite the Monge-Ampère energy in another form
\begin{equation}\label{energy}
\int_{\Omega} (-u)(u^\star)^kd\mu_u=\int_{\partial u(\Omega)}-u(Du^*(\xi))(u^*(\xi))^kd\xi \quad \text{for } u\in\mathcal{C}(\Omega).
\end{equation}
Denote
\begin{equation*}
\Phi_j(\xi)=-u_j(Du_j^*(\xi))(u_j^*(\xi))^k, \quad \xi\in\mathbb{R}^n,\ j=0,1,\alpha.
\end{equation*}
The previous lemmas tell us $u_j^*\in C^1(\mathbb{R}^n)$, so $\Phi_j$ is well-defined in the whole space. Set $x=Dv_j(\xi)$, and Lemma \ref{Fenchel} implies
\[
\xi\cdot x=u_j(x)+v_j(\xi),
\]
which is equivalent to say
\[
u_j(Du_j^*(\xi))=\xi\cdot Dv_j(\xi)-v_j(\xi)\quad \text{for all } \xi\in\mathbb{R}^n.
\]

In paticular, when $\xi\notin\partial u_j(\Omega_j)$, we have $\Phi_j(\xi)=0$. In fact, suppose for any $x\in \Omega_j$, it holds $\xi\ne Du_j(x)$, which is equivalent to say $x\ne Dv_j(\xi)$. Note that $y$ is the maximum point of the function $y\mapsto \xi\cdot y-u_j(y)$ if and only if $y\in\partial v(\xi)$. Hence the function $x\mapsto \xi\cdot x-u_j(x)$ attains at its maximum at $\tilde{x}\in\partial\Omega_j$. We have $\tilde{x}=Du_j^*(\xi)$. Therefore we obtain
\[
u_j(Du_j^*(\xi))=u_j(\tilde{x})=0.
\]
\begin{lem}\label{linear}
For any $\xi\in\mathbb{R}^n$, we have
\[
\Phi_\alpha(\xi)=(1-\alpha)\Phi_0(\xi)+\alpha \Phi_1(\xi).
\]
\end{lem}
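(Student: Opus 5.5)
Write $\Phi_j$ entirely in terms of $v_j=u_j^*$ and then compute directly. From the identity derived just before the lemma, $u_j(Du_j^*(\xi))=\xi\cdot Dv_j(\xi)-v_j(\xi)$ for every $\xi\in\mathbb{R}^n$. Hence, with $k=p-1$,
\[
\Phi_j(\xi)=\bigl(v_j(\xi)-\xi\cdot Dv_j(\xi)\bigr)v_j(\xi)^{p-1},\qquad j=0,1,\alpha .
\]
This is legitimate because $v_0,v_1,v_\alpha\in C^1(\mathbb{R}^n)$, by Lemma \ref{C1} and the remark after Lemma \ref{eff domain}. Each $v_j$ is nonnegative. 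Indeed, $v_j(\xi)\ge \xi\cdot 0-u_j(0)=-u_j(0)>0$, since $0\in\Omega_j$ and $u_j<0$ in $\Omega_j$. So $v_j$ is strictly positive, and the powers $v_j^{p}$, $v_j^{p-1}$ are smooth compositions of $C^1$ functions.

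Next rewrite the expression as
\[
\Phi_j=v_j^p-\frac1p\,\xi\cdot D\bigl(v_j^p\bigr),
\]
since $D(v_j^p)=p\,v_j^{p-1}Dv_j$. The whole point of the definition of the $L_p$ convolution is that $v_\alpha^p=(1-\alpha)v_0^p+\alpha v_1^p$ is an affine combination. The operator
\[
w\;\longmapsto\; w-\frac1p\,\xi\cdot Dw
\]
is linear in $w$. Applying it to $w=v_\alpha^p$ therefore gives $\Phi_\alpha=(1-\alpha)\Phi_0+\alpha\Phi_1$ pointwise on $\mathbb{R}^n$.

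The only point needing care is the validity of the formula $\Phi_j=(v_j-\xi\cdot Dv_j)v_j^{p-1}$ for \emph{all} $\xi$, including those outside $\partial u_j(\Omega_j)$. It follows from Lemma \ref{Fenchel}: $x=Dv_j(\xi)$ lies in $\partial v_j(\xi)$, so $\xi\in\partial u_j(x)$, giving equality in \eqref{Fen}. Here $x\in\bar\Omega_j$ may be a boundary point, where $u_j=0$, and the formula still applies. I also need $u_\alpha$ to be closed, so that $u_\alpha^*=v_\alpha$. This holds because $v_\alpha$ is a finite convex function, hence continuous and closed, so $v_\alpha^{**}=v_\alpha$. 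With these checks the lemma reduces to the one-line linearity computation above.
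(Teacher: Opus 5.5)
Your proposal is correct and is essentially the paper's proof: both use the Fenchel equality $u_j(Du_j^*(\xi))=\xi\cdot Dv_j(\xi)-v_j(\xi)$ to write $\Phi_j=v_j^p-\frac1p\,\xi\cdot D(v_j^p)$, and then use linearity in $v_j^p$ together with $v_\alpha^p=(1-\alpha)v_0^p+\alpha v_1^p$. Your extra checks are points the paper leaves implicit: the identity holds for all $\xi$ (including when $Dv_j(\xi)$ is a boundary point), $u_\alpha^*=v_\alpha$, and $v_j>0$, which is what makes $v_\alpha\in C^1$ and the real powers meaningful — for $j=\alpha$ this positivity is most directly read off from $v_0,v_1>0$ rather than from $u_\alpha(0)<0$.
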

 \begin{proof}
The identity follows by direct computation. Observe that 
\[
u_j\big(Du_j^{*}(\xi)\big)=\xi\cdot Dv_j(\xi)-v_j(\xi).
\]
Therefore
\[
\Phi_j(\xi)=\big(v_j(\xi)-\xi\cdot Dv_j(\xi)\big)\,v_j(\xi)^{k}
=v_j(\xi)^{p}-\frac{1}{p}\xi\cdot D\big(v_j^{p}\big)(\xi).
\]
Since $v_{\alpha}^{p}=(1-\alpha)v_{0}^{p}+\alpha v_{1}^{p}$, we obtain
\[
\Phi_{\alpha}(\xi)=(1-\alpha)\Phi_{0}(\xi)+\alpha\Phi_{1}(\xi).
\]
\end{proof}

Now we can prove the main theorem in this section, which shows that substituting the $L_p$ convolution into the energy functional $\mathcal{E}$ yields an identity.

\begin{theorem}\label{energy inequality}
For any $\alpha\in[0,1]$, we have
    \[
    \mathcal{E}(u_\alpha,\Omega_\alpha)=(1-\alpha)\mathcal{E}(u_0,\Omega_0)+\alpha\mathcal{E}(u_1,\Omega_1).
    \]
\end{theorem}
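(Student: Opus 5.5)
The plan is to pass everything to the momentum side and integrate the pointwise identity of Lemma \ref{linear} over all of $\mathbb{R}^n$. By \eqref{energy}, which rests on Lemma \ref{change variable}, for $j=0,1,\alpha$ we have
\[
\mathcal{E}(u_j,\Omega_j)=\int_{\partial u_j(\Omega_j)}\Phi_j(\xi)\,d\xi .
\]
For $j=\alpha$ this is legitimate because $u_\alpha\in\mathcal{C}(\Omega_\alpha)$, as shown after Lemma \ref{int C1}. The domains of integration $\partial u_j(\Omega_j)$ differ with $j$, so the first step removes them. We showed that $\Phi_j(\xi)=0$ whenever $\xi\notin\partial u_j(\Omega_j)$. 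Hence
\[
\mathcal{E}(u_j,\Omega_j)=\int_{\mathbb{R}^n}\Phi_j(\xi)\,d\xi,\qquad j=0,1,\alpha .
\]

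Next I check that these integrals make sense. Since $u_j\in C^1(\bar\Omega_j)$, the set $\partial u_j(\Omega_j)=Du_j(\Omega_j)$ is bounded. On this set $v_j$ is bounded, and $0\le -u_j\le \max(-u_j)$. Because $v_j=u_j^*\ge -u_j(0)>0$, the power $v_j^k$ is well defined and bounded. So each $\Phi_j$ is a nonnegative bounded function with compact support, hence integrable.

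With this in hand, I integrate Lemma \ref{linear} over $\mathbb{R}^n$ and use linearity of the integral:
\[
\mathcal{E}(u_\alpha,\Omega_\alpha)=\int\Phi_\alpha=(1-\alpha)\int\Phi_0+\alpha\int\Phi_1=(1-\alpha)\mathcal{E}(u_0,\Omega_0)+\alpha\mathcal{E}(u_1,\Omega_1).
\]
This is the identity claimed in Theorem \ref{energy inequality}.

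The main obstacle I expect is justifying the change of variables for $u_\alpha$. Lemma \ref{change variable} takes continuous $f$, but $(-u_\alpha)(u_\alpha^\star)^k$ is continuous only on $\Omega_\alpha$ and is bounded up to the boundary. I would handle this by exhausting $\Omega_\alpha$ with compact convex subdomains, or simply by noting $f\in C(\bar\Omega_\alpha)$. I also need the boundary set $\partial u_\alpha(\partial\Omega_\alpha)$ to contribute nothing. On that set $-u_\alpha=0$, so $\Phi_\alpha$ vanishes there, and the formula extends to integration over $\mathbb{R}^n$ without any measure-zero issue. Finally, the identity $u_j(Du_j^*(\xi))=\xi\cdot Dv_j-v_j$ for all $\xi$ uses $v_j\in C^1(\mathbb{R}^n)$, which holds for $j=\alpha$ by construction.
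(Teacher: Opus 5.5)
Your proposal is correct and follows the paper's proof: you rewrite each $\mathcal{E}(u_j,\Omega_j)$ via \eqref{energy} as $\int_{\mathbb{R}^n}\Phi_j\,d\xi$, using that $\Phi_j$ vanishes off $\partial u_j(\Omega_j)$, and then integrate the pointwise identity of Lemma \ref{linear}. Your extra checks are details the paper leaves implicit, and they are sound: nonnegativity, boundedness and compact support of $\Phi_j$, continuity of the integrand up to $\partial\Omega_\alpha$, and vanishing of $\Phi_\alpha$ on $\partial u_\alpha(\partial\Omega_\alpha)$.
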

\begin{proof}
    By \eqref{energy}, We have
    \begin{align*}
    \mathcal{E}(u_\alpha,\Omega_\alpha)&=\int_{\partial u_\alpha(\Omega_\alpha)}-u_\alpha(Du_\alpha^*(\xi))(u_\alpha^*(\xi))^k~d\xi\\
    &=\int_{\mathbb{R}^n}\Phi_\alpha(\xi)~d\xi\\
    &=\int_{\mathbb{R}^n}[(1-\alpha)\Phi_0(\xi)+\alpha \Phi_1(\xi)]~d\xi\\
    &=(1-\alpha)\int_{\partial u_0(\Omega_0)}\Phi_0(\xi)~d\xi+\alpha\int_{\partial u_1(\Omega_1)}\Phi_1(\xi)~d\xi\\
    &=(1-\alpha)\mathcal{E}(u_0,\Omega_0)+\alpha\mathcal{E}(u_1,\Omega_1).
    \end{align*}
\end{proof}

\section{Proof of Theorem \ref{main result}}\label{proof of main theorem}
We have proved that the numerator in \eqref{general variational character} satisfies an identity with respect to the $L_p$ convolution. In order to deal with the eigenvalue, we still need to use the $L_p$ version of Prékopa-Leindler inequality to establish an inequality for the denominator in \eqref{general variational character}.
\begin{lem}\label{denominator} 
    For any $\alpha\in[0,1]$, 
    \begin{equation}\label{deno}
        \left (\int_{\Omega_\alpha}|u_\alpha|^{n+p}~dx \right )^{\frac p{2n+p}}\ge (1-\alpha)\left ( \int_{\Omega_0}|u_0|^{n+p}~dx \right )^{\frac p{2n+p}}+\alpha\left ( \int_{\Omega_1}|u_1|^{n+p}~dx \right )^{\frac p{2n+p}}.
    \end{equation}
    In paticular, if $u_0,u_1$ satisfy
    \[
    \int_{\Omega_i}|u_i|^{n+p}~dx=1,\quad i=0,1,
    \]
    then
    \[
    \int_{\Omega_\alpha}|u_\alpha|^{n+p}~dx\ge 1.
    \]
\end{lem}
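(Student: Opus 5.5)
The plan is to apply the $L_p$ Prékopa--Leindler inequality, Proposition \ref{Lp PL}, with
\[
f=|u_0|^{n+p}\mathbf 1_{\bar\Omega_0},\qquad g=|u_1|^{n+p}\mathbf 1_{\bar\Omega_1},\qquad h=|u_\alpha|^{n+p}\mathbf 1_{\bar\Omega_\alpha},
\]
and parameters $s=n+p$, $\mu=1-\alpha$, $\omega=\alpha$. With these choices $\frac{p}{n+s}=\frac{p}{2n+p}$, so the conclusion of Proposition \ref{Lp PL} is exactly \eqref{deno}. The endpoint cases $\alpha\in\{0,1\}$ are trivial because $u_\alpha$ is then $u_0$ or $u_1$, so I assume $\alpha\in(0,1)$ and hence $\mu,\omega>0$. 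The ``in particular'' statement follows at once by plugging in the normalization.

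Since $h^{1/s}=-u_\alpha$ on $\bar\Omega_\alpha$, the hypothesis of Proposition \ref{Lp PL} reduces to the following. Fix $x\in\bar\Omega_0$, $y\in\bar\Omega_1$, $\lambda\in[0,1]$, and set
\[
a=(1-\lambda)^{1/q}(1-\alpha)^{1/p},\qquad b=\lambda^{1/q}\alpha^{1/p},\qquad z=ax+by.
\]
I must show $u_\alpha(z)\le a\,u_0(x)+b\,u_1(y)$. The right-hand side is $\le 0$ and finite, so this also forces $z\in\bar\Omega_\alpha$ (recall $u_\alpha=+\infty$ off $\bar\Omega_\alpha$ by Lemma \ref{eff domain}). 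Then $h(z)^{1/s}=-u_\alpha(z)\ge a(-u_0(x))+b(-u_1(y))$, which is the required condition.

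To prove the key inequality I use $u_\alpha=v_\alpha^*$, writing $u_\alpha(z)=\sup_\xi\{\xi\cdot z-v_\alpha(\xi)\}$. For each $\xi$, the Fenchel inequality of Lemma \ref{Fenchel} applied to $u_0$ and $u_1$ gives
\[
\xi\cdot z=a\,\xi\cdot x+b\,\xi\cdot y\le a\bigl(u_0(x)+v_0(\xi)\bigr)+b\bigl(u_1(y)+v_1(\xi)\bigr).
\]
So it suffices to show $a\,v_0(\xi)+b\,v_1(\xi)\le v_\alpha(\xi)$. This follows from Hölder's inequality with exponents $q,p$:
\[
(1-\lambda)^{1/q}\bigl[(1-\alpha)^{1/p}v_0\bigr]+\lambda^{1/q}\bigl[\alpha^{1/p}v_1\bigr]\le\bigl((1-\lambda)+\lambda\bigr)^{1/q}\bigl((1-\alpha)v_0^p+\alpha v_1^p\bigr)^{1/p}=v_\alpha .
\]
This step needs $v_0,v_1\ge 0$. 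That holds because $v_i(\xi)\ge \xi\cdot 0-u_i(0)=-u_i(0)>0$, since the origin is interior and $u_i<0$ there. Nonnegativity is also what makes $v_\alpha$ (defined with $p$-th powers) consistent with the definition used earlier. Taking the supremum over $\xi$ then gives $u_\alpha(z)\le a u_0(x)+bu_1(y)$.

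The main obstacle is bookkeeping rather than depth. I must check the compatibility of supports and the positivity of the $v_i$ so that Hölder applies. I must also confirm that the functions satisfy the hypotheses of Proposition \ref{Lp PL}: they are integrable, with nonempty compact supports, and $h$ vanishes on $\partial\Omega_\alpha$ consistently with Lemma \ref{eff domain}. Finally, the degenerate values $\lambda\in\{0,1\}$ in the condition are handled by the same computation with the convention $0^{1/q}=0$.
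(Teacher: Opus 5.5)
Your proposal is correct and follows the paper's proof. You prove $u_\alpha(ax+by)\le a\,u_0(x)+b\,u_1(y)$ for $a=(1-\lambda)^{1/q}(1-\alpha)^{1/p}$, $b=\lambda^{1/q}\alpha^{1/p}$ using the Fenchel inequality plus H\"older, and then apply Proposition \ref{Lp PL} with the same choices of $f,g,h,s,\mu,\omega$. The differences are cosmetic: you apply H\"older directly to $v_0,v_1$ (using $v_i\ge -u_i(0)>0$) where the paper uses the truncations $\max\{\xi\cdot x_i-u_i(x_i),0\}$, and you are slightly more careful about boundary points of the supports and the endpoint cases $\alpha\in\{0,1\}$.
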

\begin{proof}
    Let $q=\frac{p}{p-1}$ denote the Hölder conjugate of $p$. We claim that for any $x_0\in \Omega_0,x_1\in\Omega_1$ and any $a,b>0$ with 
    \begin{equation}\label{Holder condition}
    \frac{a^q}{(1-\alpha)^{\frac qp}}+\frac{b^q}{\alpha^\frac qp}=1,
    \end{equation}
    we have
    \begin{equation}\label{ab}
    u_\alpha(ax_0+bx_1)\le au_0(x_0)+bu_1(x_1).
    \end{equation}
    In fact, from
    \[
    v_i(\xi)^p\ge \max\{\xi\cdot x_i-u_i(x_i),0\}^p
    \]
    we can deduce
    \[
    v_\alpha(\xi)^p\ge (1-\alpha)\max\{\xi\cdot x_0-u_0(x_0),0\}^p+\alpha\max\{\xi\cdot x_1-u_1(x_1),0\}^p\quad \text{for all }\xi\in\mathbb{R}^n.
    \]
    By \eqref{Holder condition} it is easy to check the following inequality
    \[
    aA+bB\le[(1-\alpha)A^p+\alpha B^p]^{\frac 1p}\quad \text{for all }A,B\ge 0.
    \]
    Set $A=\max\{\xi\cdot x_0-u_0(x_0),0\},~B=\max\{\xi\cdot x_1-u_1(x_1),0\}$, and we obtain
    \begin{align*}
        v_\alpha(\xi)&\ge aA+bB\\
        &\ge a(\xi\cdot x_0-u_0(x_0))+b(\xi\cdot x_1-u_1(x_1))\\
        &=\xi\cdot(ax_0+bx_1)-(au_0(x_0)+bu_1(x_1)).
    \end{align*}
    Thus
    \begin{equation}\label{ab inq}
        \xi\cdot(ax_0+bx_1)-v_\alpha(\xi)\le au_0(x_0)+bu_1(x_1).
    \end{equation}
    Taking the supremum over $\xi$ in \eqref{ab inq}, we obtain the inequality \eqref{ab} in our claim.

    In order to apply the $L_p$ version of Prékopa-Leindler inequality, we substitute the symbols appearing in Proposition \ref{Lp PL} with concrete quantities relevant to our problem:
    \[
    h:=(-u_\alpha)^{n+p},\ f:=(-u_0)^{n+p},\ g:=(-u_1)^{n+p},\ s=n+p,\ \mu=1-\alpha,\ \omega=\alpha.
    \]
    Given $\lambda\in[0,1]$, define
    \[
    a=(1-\lambda)^{\frac1q}(1-\alpha)^{\frac1p},\ b=\lambda^{\frac1q}\alpha^{\frac1p}.
    \]
    Then the condition in Proposition \ref{Lp PL} is satisfied. Hence the inequality \eqref{deno} holds.
\end{proof}

Lemma \ref{ab} immediately implies the eigenvalue satisfies a convexity inequality with respect to $L_p$-Minkowski addition.
\begin{lem}\label{eigen convex ine}
    For $\Omega_0,\Omega_1\in \mathcal{U}_s$ and $\alpha\in[0,1]$, we have
    \begin{equation*}
    \lambda(\Omega_\alpha)\le (1-\alpha)\lambda(\Omega_0)+\alpha\lambda(\Omega_1).
    \end{equation*}
\end{lem}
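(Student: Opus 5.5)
We plan to combine Theorem \ref{energy inequality} with the normalized case of Lemma \ref{denominator}, taking the eigenfunctions on $\Omega_0$ and $\Omega_1$ as test functions in the variational characterization \eqref{eigen}.

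First we fix the test functions. By Proposition \ref{Tong Yau}, for $i=0,1$ there is an eigenfunction $u_i\in C^{2,\alpha}(\bar\Omega_i)$ that realizes the infimum in \eqref{eigen} on $\Omega_i$. It is strictly convex and vanishes on $\partial\Omega_i$, so $u_i\in\mathcal{C}(\Omega_i)$. The quotient in \eqref{eigen} is homogeneous of degree $0$ under $u\mapsto cu$, since numerator and denominator both scale like $c^{n+k+1}=c^{n+p}$. We may therefore normalize so that
\[
\int_{\Omega_i}|u_i|^{n+p}\,dx=1,\qquad\text{which gives}\qquad \mathcal{E}(u_i,\Omega_i)=\lambda(\Omega_i).
\]

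Next we form the $L_p$ convolution $u_\alpha$ of $u_0$ and $u_1$. By Section \ref{Lp type convolution}, $u_\alpha$ lies in $\mathcal{C}(\Omega_\alpha)$, and by Lemma \ref{eff domain} its domain $\Omega_\alpha$ is exactly $(1-\alpha)\Omega_0+_p\alpha\Omega_1$. So $u_\alpha$ is an admissible competitor in \eqref{eigen} on $\Omega_\alpha$. This uses that $\Omega_\alpha$ is itself a bounded, strictly convex domain containing the origin, so that Proposition \ref{Tong Yau} applies there. If smoothness of $\Omega_\alpha$ were needed, it would follow from the smoothness of $h_0$ and $h_1$ and from $h_0,h_1>0$ away from the origin.

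We then estimate the quotient at $u_\alpha$:
\[
\lambda(\Omega_\alpha)\le \frac{\mathcal{E}(u_\alpha,\Omega_\alpha)}{\int_{\Omega_\alpha}|u_\alpha|^{n+p}dx}
\le \mathcal{E}(u_\alpha,\Omega_\alpha)=(1-\alpha)\lambda(\Omega_0)+\alpha\lambda(\Omega_1).
\]
The first inequality is the definition \eqref{eigen}. The second uses that the denominator is at least $1$, by the normalized case of Lemma \ref{denominator}, together with the fact that the numerator is nonnegative (since $-u_\alpha\ge0$ and $u_\alpha^\star\ge0$). The final identity is Theorem \ref{energy inequality}.

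The main obstacle is confirming that $u_\alpha$ genuinely belongs to the admissible class of \eqref{eigen}. This requires $C^1$ regularity up to the boundary and strict convexity, and it requires that $\Omega_\alpha$ meets the hypotheses under which the eigenvalue $\lambda(\Omega_\alpha)$ is defined. Both points were argued after Lemma \ref{int C1}, so we would simply invoke that discussion. The endpoint cases $\alpha=0$ and $\alpha=1$ are trivial, since then $u_\alpha=u_0$ or $u_1$.
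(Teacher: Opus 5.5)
Your proposal is correct and is essentially the paper's proof. Both normalize the eigenfunctions in $L^{n+p}$, take their $L_p$ convolution $u_\alpha\in\mathcal{C}(\Omega_\alpha)$ as a competitor, and chain the variational characterization, the normalized case of Lemma \ref{denominator}, and Theorem \ref{energy inequality}. Your extra remarks (nonnegativity of the numerator, the endpoint cases $\alpha\in\{0,1\}$) make explicit what the paper leaves implicit. One caveat on your side remark: smoothness of $\partial\Omega_\alpha$ does not follow from smoothness and positivity of $h_0,h_1$ alone. It needs positive-definiteness of $D^2h_i$ on $\xi^\perp$ (positive curvature of $\Omega_0,\Omega_1$), which the convexity and coordinatewise monotonicity of the $\ell_p$-norm then transfer to $h_\alpha$.
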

\begin{proof}
    Let $u_0,u_1$ be eigenfunctions corresponding to $\lambda(\Omega_0)$ and $\lambda(\Omega_1)$ respectively with
    \[
    \int_{\Omega_i}|u_i|^{n+p}~dx=1,\quad i=0,1.
    \]
    It follows from \cite{Tong-Yau} that $u_0\in\mathcal{C}(\Omega_0)$, $u_1\in\mathcal{C}(\Omega_1)$. Thus their $L_p$ convolution $u_\alpha$ is also in $\mathcal{C}(\Omega_\alpha)$. According to the formula \eqref{general variational character} , Theorem \ref{energy inequality} and Lemma \ref{denominator}, we compute that
    \begin{align*}
        \lambda(\Omega_\alpha)&\le \frac{\int_{\Omega_\alpha}(-u_\alpha)(u_\alpha^\star)^k~d\mu_{u_\alpha}}{\int_{\Omega_\alpha}|u_\alpha|^{n+p}~dx}\\
        &\le \int_{\Omega_\alpha}(-u_\alpha)(u_\alpha^\star)^k~d\mu_{u_\alpha}\\
        &=(1-\alpha)\int_{\Omega_0}(-u_0)(u_0^\star)^k~d\mu_{u_0}+\alpha\int_{\Omega_1}(-u_1)(u_1^\star)^k~d\mu_{u_1}\\
        &=(1-\alpha)\lambda(\Omega_0)+\alpha\lambda(\Omega_1).
    \end{align*}
\end{proof}

The following lemma shows that the eigenvalue satisfies the homogeneity with respect to domain dilations.
\begin{lem}\label{dilation}
    If $\Omega\in\mathcal{U}_s$, $t>0$, then $\lambda(t\Omega)=t^{-2n}\lambda(\Omega)$. 
\end{lem}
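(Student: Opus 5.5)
The plan is to prove the scaling law by a change of variables in the variational characterization \eqref{general variational character}. By Proposition \ref{Tong Yau}, $\lambda(\Omega)$ is the infimum of the Rayleigh quotient over $\mathcal{C}(\Omega)$. It therefore suffices to find a bijection $\mathcal{C}(\Omega)\to\mathcal{C}(t\Omega)$ under which the quotient is multiplied by exactly $t^{-2n}$. The natural candidate is $w(y):=u(y/t)$ for $y\in t\Omega$. Composition with the linear map $y\mapsto y/t$ preserves $C^1(\bar\Omega)$-regularity, strict convexity and the zero boundary values. It is also invertible, with inverse $u(x)=w(tx)$. Hence the map $u\mapsto w$ is a bijection $\mathcal{C}(\Omega)\to\mathcal{C}(t\Omega)$. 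Note also that $t\Omega\in\mathcal{U}_s$.

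Next I compute how each ingredient scales. Since $Dw(y)=t^{-1}Du(y/t)$, we get
\[
w^\star(y)=y\cdot Dw(y)-w(y)=(y/t)\cdot Du(y/t)-u(y/t)=u^\star(y/t),
\]
so $w^\star$ is invariant under the change of variables. For the Monge-Ampère measure, $\partial w(E)=t^{-1}\partial u(t^{-1}E)$, so $\mu_w(E)=t^{-n}\mu_u(t^{-1}E)$. For smooth $u$ this is just $\det D^2w(y)=t^{-2n}\det D^2u(y/t)$ combined with $dy=t^n\,dx$. Therefore
\[
\int_{t\Omega}(-w)(w^\star)^k\,d\mu_w=t^{-n}\int_\Omega(-u)(u^\star)^k\,d\mu_u .
\]
The denominator transforms as
\[
\int_{t\Omega}(-w)^{n+k+1}\,dy=t^{n}\int_\Omega(-u)^{n+k+1}\,dx .
\]
So the quotient for $w$ equals $t^{-2n}$ times the quotient for $u$. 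Taking infima over the bijection gives $\lambda(t\Omega)=t^{-2n}\lambda(\Omega)$.

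The only point requiring care is the measure identity for non-smooth $u\in\mathcal{C}(\Omega)$, where $d\mu_u$ is not a density. I would handle this with the pushforward relation $\partial w(y)=t^{-1}\partial u(y/t)$, which follows directly from the definition of the subdifferential, and then apply the Lebesgue scaling $|t^{-1}A|=t^{-n}|A|$. The resulting identity holds first for Borel sets and hence for integrals of continuous functions. Alternatively, one can use the representation \eqref{energy} through Lemma \ref{change variable}. Since $w^*(\eta)=u^*(t\eta)$, substituting $\xi=t\eta$ in the momentum-space integral gives the factor $t^{-n}$ directly. Either route is routine, so I do not expect a real obstacle. As a cross-check, one can instead scale the eigenfunction itself: $w(y)=u(y/t)$ satisfies $(w^\star)^k\det D^2w=t^{-2n}\lambda|w|^{n+k}$, and the uniqueness part of Proposition \ref{Tong Yau} then yields the same conclusion.
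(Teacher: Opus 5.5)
Your proposal is correct, but your main argument is not the paper's; the paper's proof is exactly your closing cross-check.

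\textbf{The paper's route.} It takes the eigenfunction $u$ on $\Omega$ and sets $u_t(y)=u(y/t)$. The pointwise identity $(u_t^\star)^k\det D^2u_t(y)=t^{-2n}\bigl[(u^\star)^k\det D^2u\bigr](y/t)$ shows that $u_t$ solves the eigenvalue problem on $t\Omega$ with eigenvalue $t^{-2n}\lambda(\Omega)$. The uniqueness part of Proposition \ref{Tong Yau} then gives the result. This needs only a classical $C^2$ computation, since eigenfunctions are smooth in the interior, and no measure theory.

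\textbf{Your route.} You rescale the whole admissible class and show that the quotient in \eqref{general variational character} is homogeneous of degree $-2n$ for every admissible function, not just the minimizer. This is a slightly stronger statement about the infimum itself. It fits how $\lambda$ is used later in the paper, through the variational formula in Lemma \ref{eigen convex ine}.

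The price is that admissible functions are merely $C^1$, so you must handle their Monge--Amp\`ere measure. You do this correctly, either via $\partial w(y)=t^{-1}\partial u(y/t)$, or via $w^*(\eta)=u^*(t\eta)$ combined with Lemma \ref{change variable}. Your computations are all correct: the invariance $w^\star(y)=u^\star(y/t)$, the factor $t^{-n}$ for the energy, and the factor $t^{n}$ for the denominator.
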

\begin{proof}
    Let $u$ be the eigenfunction corresponding to the eigenvalue $\lambda(\Omega)$. Then $u$ satisfies 
    \begin{equation*}
      \begin{cases}
      (u^\star)^k\det D^2u = \lambda(\Omega)|u|^{n+k} &\text{ in }\Omega\\
      u = 0 &\text{ on }\partial\Omega.
      \end{cases}
    \end{equation*}
Define $u_t(x):=u(x/t)$. Then $u_t$ satisfies
    \begin{equation*}
      \begin{cases}
      (u_t^\star)^k\det D^2u_t = t^{-2n}\lambda(\Omega)|u_t|^{n+k} &\text{ in }t\Omega\\
      u_t = 0 &\text{ on }\partial(t\Omega).
      \end{cases}
    \end{equation*}
By uniqueness of the eigenvalue, we conclude that $\lambda(t\Omega)=t^{-2n}\lambda(\Omega)$.
\end{proof}

All preliminary steps are now complete. Let us start to prove our main theorem.
\begin{proof}[proof of Theorem \ref{main result}]
    Set $\Omega_0':=\lambda(\Omega_0)^{\frac1{2n}}\Omega_0$ and $\Omega_1':=\lambda(\Omega_1)^{\frac1{2n}}\Omega_1$. By Lemma \ref{dilation}, we have $\lambda(\Omega_0')=\lambda(\Omega_1')=1$. Define $\alpha'\in[0,1]$ to be
    \[
    \alpha'=\frac{\alpha\lambda(\Omega_1)^{-\frac p{2n}}}{(1-\alpha)\lambda(\Omega_0)^{-\frac p{2n}}+\alpha\lambda(\Omega_1)^{-\frac p{2n}}}.
    \]
    In addition, we define the domain
    \[
    \Omega_\alpha':=(1-\alpha')\Omega_0'+_p\alpha'\Omega_1'.
    \]
    Recall that $\Omega_\alpha:=(1-\alpha)\Omega_0+_p\alpha\Omega_1$, and $h_i,h_i'$ denote the support functions of $\Omega_i$ and $\Omega_i'$ respectively for $i=0,1,\alpha$. We compute that
    \begin{align*}
        (h_\alpha')^p&=(1-\alpha')(h_0')^p+\alpha'(h_1')^p\\
        &=(1-\alpha')\lambda(\Omega_0)^{\frac p{2n}}h_0^p+\alpha'\lambda(\Omega_1)^{\frac p{2n}}h_1^p\\
        &=\frac{(1-\alpha)h_0^p+\alpha h_1^p}{(1-\alpha)\lambda(\Omega_0)^{-\frac p{2n}}+\alpha\lambda(\Omega_1)^{-\frac p{2n}}}\\
        &=\frac{h_\alpha^p}{(1-\alpha)\lambda(\Omega_0)^{-\frac p{2n}}+\alpha\lambda(\Omega_1)^{-\frac p{2n}}}.
    \end{align*}
    This identity of support functions implies two convex bodies are equal, that is
    \begin{equation}\label{1}
    \Omega_\alpha'=\left [(1-\alpha)\lambda(\Omega_0)^{-\frac p{2n}}+\alpha\lambda(\Omega_1)^{-\frac p{2n}}\right ]^{-\frac 1p}\Omega_\alpha.
    \end{equation}
    Then Lemma \ref{dilation} and formula \eqref{1} shows that
    \begin{equation}\label{2}
        \lambda(\Omega_\alpha')=\left [ (1-\alpha)\lambda(\Omega_0)^{-\frac p{2n}}+\alpha\lambda(\Omega_1)^{-\frac p{2n}} \right ]^{\frac{2n}p}\lambda(\Omega_\alpha).
    \end{equation}
    From Lemma \ref{eigen convex ine} we have
    \begin{equation}\label{3}
        \lambda(\Omega_\alpha')\le (1-\alpha')\lambda(\Omega_0')+\alpha'\lambda(\Omega_1')=1.
    \end{equation}
    Finally, \eqref{2}\eqref{3} immediately give \eqref{Lp BM}, which finishes the proof.
\end{proof}

\section*{Acknowledgment}
F. Jiang has been supported by the National Natural Science Foundation of China (No. 12271093) and the Jiangsu Provincial Scientific Research Center of Applied Mathematics (Grant No. BK20233002), and Shanghai Institute for Mathematics and Interdisciplinary Sciences (SIMIS) under grant number SIMIS-ID-2025-AD.

\end{document}